\pgfplotsset{compat=1.18}
\newtheorem{proposition}{Proposition}
\newtheorem{observation}{Observation}
\newtheorem{example}{Example}
\newcommand{\Z}{\ensuremath{\mathbb{Z}} }
\newcommand{\rev}[1]{{\color{black} #1}}
\journal{ }
\begin{document}

\begin{frontmatter}

%% Title, authors and addresses

%% use the tnoteref command within \title for footnotes;
%% use the tnotetext command for theassociated footnote;
%% use the fnref command within \author or \address for footnotes;
%% use the fntext command for theassociated footnote;
%% use the corref command within \author for corresponding author footnotes;
%% use the cortext command for theassociated footnote;
%% use the ead command for the email address,
%% and the form \ead[url] for the home page:
%% \title{Title\tnoteref{label1}}
%% \tnotetext[label1]{}
%% \author{Name\corref{cor1}\fnref{label2}}
%% \ead{email address}
%% \ead[url]{home page}
%% \fntext[label2]{}
%% \cortext[cor1]{}
%% \affiliation{organization={},
%%             addressline={},
%%             city={},
%%             postcode={},
%%             state={},
%%             country={}}
%% \fntext[label3]{}

\title{ Strengthening Dual Bounds for Multicommodity Capacitated Network Design with Unsplittable Flow Constraints
%% Options:
% Valid Inequalities for Strengthening Dual Bounds in Multicommodity Capacitated Network Design
% Valid Inequalities for Strengthening Dual Bounds in Multicommodity Capacitated Network Design with Unsplittable Flow Constraints
% Valid Inequalities to Improve Bounds for Multicommodity Capacitated Network Design Problems
% Enhancing Solver Performance for Multicommodity Capacitated Network Design with Unsplittable Flow Constraints
% Strengthening Dual Bounds for Multicommodity Capacitated Network Design with Unsplittable Flow Constraints
}
% 

%% use optional labels to link authors explicitly to addresses:
%% \author[label1,label2]{}
%% \affiliation[label1]{organization={},
%%             addressline={},
%%             city={},
%%             postcode={},
%%             state={},
%%             country={}}
%%
%% \affiliation[label2]{organization={},
%%             addressline={},
%%             city={},
%%             postcode={},
%%             state={},
%%             country={}}

\author[inst2]{Lacy M. Greening\corref{cor1}}

\cortext[cor1]{Corresponding author: lacy.greening@asu.edu}

\affiliation[inst2]{organization={School of Computing and Augmented Intelligence},%Department and Organization
            addressline={Arizona State University}, 
            city={Tempe},
            % postcode={00000}, 
            state={Arizona},
            country={United States}}  
            
\affiliation[inst1]{organization={H. Milton Stewart School of Industrial and Systems Engineering},%Department and Organization
            addressline={Georgia Institute of Technology}, 
            city={Atlanta},
            % postcode={00000}, 
            state={Georgia},
            country={United States}}

\author[inst1]{Santanu S. Dey}
\author[inst1]{Alan L. Erera}

% \affiliation[inst2]{organization={Department Two},%Department and Organization
%             addressline={Address Two}, 
%             city={City Two},
%             postcode={22222}, 
%             state={State Two},
%             country={Country Two}}

\begin{abstract}

Multicommodity capacitated network design (MCND) models can be used to optimize the consolidation of shipments within e-commerce fulfillment networks. In practice, fulfillment networks require that shipments with the same origin and destination follow the same transfer path.
% introducing an unsplittable flow constraint. 
This unsplittable flow requirement 
complicates the MCND problem, requiring integer programming (IP) formulations \rev{in which} binary variables replac\rev{e} continuous flow variables. 
% This complication transforms the MCND problem into one that uses integer programming (IP) formulations with binary variables in place of continuous flow variables.
To enhance the solvability 
% of instances 
of this variant of the MCND problem for large-scale logistics networks, this work focuses on strengthening dual bounds. We investigate the polyhedra of arc-set relaxations, and we introduce two new classes of valid inequalities that can be implemented within solution approaches.
% and an implementation of metric inequalities. 
We develop one approach that dynamically adds valid inequalities to the root node of a reformulation of the MCND IP with additional valid
% using a multiple-choice binary variable scheme for arc capacities, 
metric inequalities.
% , found to help solvers identify more effective cutting planes.
% that aid commercial solvers in identifying better cutting planes, and  using the new valid inequalities added through solver callbacks. 
% We conclude with a comprehensive computational study using path-based instances derived from data provided by a large U.S.-based e-commerce company, along with a well-known set of arc-based instances. The primary experiments focus on the path-based formulation, and ultimately show that our proposed approach decreases the IP Gap by an average of 26.5\% and 22.5\% for the two largest instance groups, as compared to the original formulation. We further demonstrate the strength of the new valid inequalities using the arc-based instances
We \rev{show}
% demonstrate
the effectiveness of our ideas with a comprehensive computational study using path-based fulfillment instances, constructed from data provided by a large U.S.-based e-commerce company, and the well-known arc-based Canad instances.
% a publicly-available set of arc-based instances. 
Experiments show that our best solution approach for a practical path-based model reduces the IP gap by an average of 26.5\% and 22.5\% for the two largest instance groups, compared to solving the reformulation alone, demonstrating its effectiveness in improving the dual bound. In addition, experiments using only the arc-based relaxation highlight the strength of our new valid inequalities relative to the linear programming relaxation (LPR), yielding an IP-gap reduction of more than 85\%. 
% The addition of the valid inequalities to the linear program relaxation (LPR) leads to an average 90\% reduction in the IP gap compared to the LPR solution.

% \lmg{Include actual numbers from the results.}

% , demonstrating the effectiveness of our proposed solution approach in strengthening the dual bound. We further highlight the strength of our new valid inequalities when applied to arc-based formulations using the well-known Canad instances. Our results 
% for arc-based formulations to further demonstrate the applicability and strength of our new valid inequalities.
% demonstrating the effectiveness of our proposed methods in improving solver performance and solution quality for complex logistics problems.

\end{abstract}

% %%Graphical abstract
% \begin{graphicalabstract}
% \includegraphics{grabs}
% \end{graphicalabstract}

% %%Research highlights
% \begin{highlights}
% \item Research highlight 1
% \item Research highlight 2
% \end{highlights}

\begin{keyword}
Network design, fulfillment logistics, valid inequalities, dual bounds
% %% keywords here, in the form: keyword \sep keyword
% keyword one \sep keyword two
% %% PACS codes here, in the form: \PACS code \sep code
% \PACS 0000 \sep 1111
% %% MSC codes here, in the form: \MSC code \sep code
% %% or \MSC[2008] code \sep code (2000 is the default)
% \MSC 0000 \sep 1111
\end{keyword}

\end{frontmatter}

%% \linenumbers

%% main text
\section{Introduction} \label{sec:intro} 
The rapid growth of e-commerce has led companies to seek improved fulfillment logistics systems to reduce the costs of shipping customer orders and meeting on-time delivery promises. Optimization approaches for the design of such networks have subsequently received attention \citep{wang2019hybrid,fontaine2021scheduled,CrainicGendreau2021,hewitt2022flexible,greening2023lead,benidis2023middle,satici2024tactical}. A core optimization modeling approach used in this application is the \textit{multicommodity capacitated network design} (MCND) problem, which seeks to optimize the transportation of shipments through a minimum-cost transshipment network \citep{magnanti1984network,minoux1989networks,atamturk2002splittable,frangioni20090,AlperChND}. Shipments are grouped into \textit{commodities}, defined as origin-destination pairs with specific demand volumes, and the design model prescribes one or more paths for the flow of each commodity. The selected paths and assigned shipment flows then require integer multiples of a capacity unit (e.g., truck trailers) to be installed on directed arcs, incurring a fixed cost for each installed unit and possibly a variable cost based on demand volume transported. Costs are shared among commodities that use the same arc, and this modeling approach identifies the best ways to consolidate shipments and improve the fill rate of dispatched trailers. Recent studies show that these approaches can reduce fulfillment network costs by over 30\%. However, achieving these results for the practical networks operated by industry relies on solving large-scale mixed-integer programming models involving thousands of binary and integer variables, as well as tens of thousands of constraints \citep{greeningintegrating}.

In real-world fulfillment networks, there are additional dispatch and network design constraints specific to each organization beyond those typically found in the standard MCND model. A key distinction between practical applications and the standard MCND model is that in practice all shipments of the same commodity must follow the same path of arcs from origin to destination within the network. 
% This requirement enforces 
That is, MCND models for fulfillment networks have unsplittable (or non-bifurcated) flow constraints  \citep{atamturk2002splittable,HewittMike2010CEaH,greening2023lead,hewitt2013branch,chen2021exact}. This requirement simplifies the logistics process and reduces errors, since outbound loading destinations remain consistent for shipments of the same commodity. 
However, it can make realistically-sized instances even more challenging to solve to optimality because it requires replacing continuous flow variables with binaries. 
Numerous works focus on developing effective metaheuristic approaches to identify strong primal solutions for these so-called single-path MCND problems \cite{CrainicGendreau2021, Bakir2021-MCSND}; however, assessing the quality of solutions is typically difficult for realistically-sized instances due to weak dual bounds, a common issue in network design problems caused by their weak linear programming (LP) relaxations. In this paper, we aim to enhance the solvability of this specific integer programming (IP) variant of the MCND problem \rev{encountered in transportation and logistics by strengthening its dual bounds.}
Specifically, we:\vspace{-2mm}
\begin{itemize}\setlength\itemsep{-1mm}
    \item reformulate the MCND model to use a multiple-choice binary variable scheme to select arc capacities, as opposed to integer capacity variables, and verify this reformulation enables solvers to find stronger lower bounds;
    \item strengthen the LP relaxation of this reformulation by introducing two new classes of valid inequalities for the unsplittable MCND problem (applicable to both arc- and path-based formulations);
    \item \rev{design an integrated procedure for generating and selecting metric inequalities, alongside additional valid inequalities, that yields stronger dual bounds for the MCND problem;}and
    \item conduct a computational study on two sets of instances---one for path-based formulations and the other for arc-based formulations---to 
    % the first is generated using data from a large U.S.-based e-commerce company for path-based formulations and the other is the well-known Canad instances for arc-based formulations -- 
    % demonstrate
    \rev{show the strengthening effect of our valid inequalities on LP relaxations and the performance benefits of our integrated approach.}
    % \rev{show} the value of our proposed reformulation, valid inequalities, and solver-aiding \rev{metric inequalities}. %constraints.
\end{itemize}

The remainder of this paper is organized as follows. \rev{In Section \ref{sec:lit}, we review relevant literature.} In Section \ref{sec:form}, we define and model the MCND problem using two path-based formulations, one which uses integer variables to model arc capacities (INT) and another which uses a multiple-choice selection of binary variables to model arc capacities (BIN).
% the integer-capacity MCND model using multiple-choice and provide a reformulation with only binary variables. 
In Section \ref{sec:relax}, we study a structured relaxation of the BIN formulation and introduce two new classes of valid inequalities. In Section \ref{sec:helper}, we describe a set of metric inequalities
% which\rev{, when implemented, improve the dual bounds and overall solution performance of commercial solvers on the MCND instances.} 
\rev{and a procedure for generating and selecting them that, when implemented, improve the dual bounds and overall solution performance of commercial solvers on the MCND instances.}
% \lmg{Might need to add in something about the algorithm engineering once that's added.}
% aid solvers 
% in generating and selecting better cutting planes. 
% by helping them generate and select better cutting planes implied by the integer hull of these metric inequalities. 
% \ale{Again, is bounding procedures the right phrasing?} 
Finally, in Section \ref{sec:comp}, we report the results of computational experiments performed on instances constructed from e-commerce fulfillment network and demand data, as well as on a set of MCND instances for arc-based formulations.

\section{\rev{Literature Review}}\label{sec:lit}
\rev{The multicommodity capacitated network design (MCND) problem is a central model in network optimization, with applications in telecommunication, transportation, and logistics systems \citep{minoux1989networks,balakrishnan1991models,gavish1991topological,wong2021telecommunications,crainic2000service,wieberneit2008service,Crainic_Gendreau_Gendron_2021}. The problem extends the classical network flow formulation by including fixed-charge or capacity-expansion costs, leading to mixed-integer formulations that jointly determine which arcs to install and how to route multiple commodities \citep{ahuja1988network,magnanti1984network}. 
MCND formulations have been applied to many problems ranging from capacity installation and survivability planning in data networks \citep{minoux1989networks,gavish1991topological,gavish1990backbone,wong2021telecommunications} to service network design in motor carrier, railroad, liner shipping, and public transportation systems \citep{Bakir2021-MCSND,CRAINIC1986,Powell1986,PowellSheffi1983,reinhardt2012branch,laporte2020design}.
For a detailed classification of MCND problem variants, applications, and solution approaches, the reader is referred to \citet{ouorou2000survey}, \citet{salimifard2022multicommodity}, and \citet{Crainic_Gendreau_Gendron_2021}.}

\rev{In this study, we focus on a transportation and logistics variant of the MCND problem in which all commodity flows are unsplittable; that is, each commodity must be routed along a single path represented by binary flow variables \citep{atamturk2002splittable,HewittMike2010CEaH,hewitt2013branch,chen2021exact} in contrast to splittable formulations that allow continuous flow allocations to multiple paths \citep{crainic2001bundle,ghamlouche2003cycle,chouman2017commodity}. The formulation considered here assumes a single facility type that can be installed in integer multiples on each arc with no pre-existing capacity, a useful model for load-planning decisions such as scheduling trailer dispatches between terminals in large-scale logistics networks \citep{greening2023lead,Bakir2021-MCSND,erera2013improved}. 
Many studies have developed specialized heuristic and exact hybrid methods to address the combinatorial structure of unsplittable network design models \citep{hewitt2013branch,erera2013improved,greeningintegrating,eom2025recursive}. 
% Examples include large neighborhood search strategies \citep{CrainicGendreau2021,greening2023lead}, branch-and-cut and branch-and-price frameworks \citep{hewitt2013branch,}, . 
While these approaches are effective in producing high-quality primal solutions, assessing optimality for large-scale instances remains challenging, as the LP relaxation of the MCND is generally weak and becomes even weaker under the unsplittable flow constraint.}

\rev{Efforts to improve dual bounds for the MCND problem have therefore focused on the arc-set relaxation, with much of the research analyzing its polyhedral structure and developing valid inequalities and separation algorithms}
\citep{atamturk2002splittable,chen2021exact,brockmuller1996designing,van2002polyhedral,benhamiche2016unsplittable}.
\citeauthor{brockmuller1996designing} (\citeyear{brockmuller1996designing}) introduce the $c$-strong inequalities and characterize their necessary and sufficient conditions to be facet-defining. 
% \citeauthor{atamturk2002splittable} 
% The authors of \cite{atamturk2002splittable} 
\citeauthor{atamturk2002splittable} (\citeyear{atamturk2002splittable}) extend this work by showing that the separation problem of $c$-strong inequalities is $\mathcal{NP}$-hard and that it is sufficient to consider only the subspace of fractional variables. The authors also introduce the \rev{$r$}-split $c$-strong and lifted knapsack cover inequalities, both of which include $c$-strong inequalities as a special case with \rev{$r$}-split $c$-strong inequalities often resulting in greater improvements. \rev{\citeauthor{benhamiche2016unsplittable} (\citeyear{benhamiche2016unsplittable}) addressed the Unsplittable Non-Additive Capacitated Network Design (UNACND) problem and analyzed its arc-set polyhedron using a general class of unitary-step set function polyhedra, deriving the Min-Set I and II inequalities as generalizations of the classic $c$-strong inequalities. The Min-Set inequalities strengthen the arc-set relaxation through its combinatorial structure but under non-additive capacity assumptions, where multiple facilities on the same arc cannot be combined.
% in contrast, our work focuses on the additive unsplittable MCND variant.
}

More recently,  \rev{and most closely related our work,}
% the authors of \cite{chen2021exact} 
\citeauthor{chen2021exact} (\citeyear{chen2021exact}) developed a generalized exact separation algorithm for the unsplittable flow arc-set polyhedron \rev{that accommodates multiple facility types with different capacity levels and allows existing (including negative) capacity. Because their model allows additive capacity across facility types and incorporates existing capacity, their facet-defining inequalities include a constant term.
In contrast, our work focuses on a realistic e-commerce network design setting with a single facility type and no pre-existing capacities, where a binary capacity formulation yields valid inequalities derived from the arc-set polyhedron without a constant term.}
% , allowing for an arbitrary number of facilities and capacities (i.e., installing multiple unit types with varying capacities, instead of integer multiples of one type).

Motivated by these studies, 
% we will also study the arc-set polyhedra to identify two new classes of valid inequalities, 
% Building on these insights, 
\rev{our work proposes} two new classes of valid inequalities \rev{derived from cover-type logic \citep{balas1978facets,gu1999lifted,wolsey1975faces}} applicable to both arc- and path-based unsplittable MCND formulations,
% \lmg{insert brief statement about how they are cover inequalities (with classic refs here).}
in addition to identifying 
% formulations and 
an implementation of metric inequalities \citep{AlperChND,bienstock1998minimum,costa2009benders} \rev{that have been used within decomposition and relaxation frameworks to improve computational performance in MCND.}
% which improve weak dual bounds. 
% Additionally, similar to the authors of \cite{chen2021exact}, we will also develop a row-generation approach to separate one class of our newly identified valid inequalities.

\section{Formulations} \label{sec:form}
Let $G=(\mathcal{N},\mathcal{A})$ define a transshipment network, where $\mathcal{N}$ represents a set of locations (nodes) in the network and $\mathcal{A}$ represents a set of directed arcs connecting pairs of locations. Shipment demand is modeled using a set $\mathcal{K}$ of commodities. Each commodity $k\in\mathcal{K}$ has an origin $o(k)\in \mathcal{N}$, destination $d(k)\in \mathcal{N}$, and demand volume $d_k$ that must be shipped from origin to destination along a single path (or sequence of adjoined arcs) through the network. As the number of paths between pairs of locations can be exponential, we define a set of paths $\mathcal{P}_k$ that each commodity $k\in \mathcal{K}$ can use. 
% The path set $\mathcal{P}_k$ is composed of a direct (or a single arc connecting origin to destination) and consolidation paths (or sequences of two or more arcs). 
% For notational convenience, we denote $\mathcal{P}\coloneqq\cup_{k\in \mathcal{K}}\mathcal{P}_k$. 
Let $f_a$ be the fixed cost of installing one unit of capacity on arc $a\in \mathcal{A}$, $c_p$ be the variable cost per unit of demand when using path $p\in \cup_{k\in \mathcal{K}}\mathcal{P}_k$, and $q_a$ denote the unit capacity of arc $a\in\mathcal{A}$. 
The MCND problem is to determine a minimum-cost allocation of transportation capacity on network arcs to ensure that commodity demand requirements are satisfied.

\subsection{Integer Capacity Variables}
Let binary variables $x_p$ indicate if commodity $k\in\mathcal{K}$ uses path $p\in \mathcal{P}_k$ or not. Let integer variables $\tau_a$ represent the number of capacity units installed on arc $a\in \mathcal{A}$. The path-based unsplittable MCND problem with integer capacity variables (referred to as INT) is formulated as follows:

{ \small \begin{mini!}[2]
	{\scriptstyle x,\tau}{\sum_{k\in\mathcal{K}}\sum_{p \in \mathcal{P}_k}c_pd_kx_p +
% 	+\sum_{l \in \mathcal{L}}\big(A_{l}f_{l}+B_{l}\sum_{k\in \mathcal{K}}\sum_{\{r\in \mathcal{R}_k|r \ni l\}}V_kx_r\big)
	\sum_{a\in \mathcal{A}}f_a\tau_a}{\label{rblpSM}}{\label{ObjSM}}
	\addConstraint{\sum_{p \in \mathcal{P}_k}x_p}{=1,}{\  \forall \,  k \in \mathcal{K} \label{1stSM}}
	\addConstraint{\sum_{k\in \mathcal{K}}\sum_{\{p\in \mathcal{P}_k|p \ni a\}}d_kx_p}{\leq q_a\tau_{a},\quad}{  \ \forall \,  a\in \mathcal{A} \label{8thSM}}
% 	\addConstraint{\sum_{k\in \mathcal{K}}\sum_{\{r\in \mathcal{R}_k|r \ni l\}}V_kx_r}{\geq Q^{min}_{l}f_{l},}{  \ \forall \,  l\in \mathcal{L} \label{9thSM}}
	\addConstraint{x_p}{\in \{0,1\},}{\ \forall \, p \in \mathcal{P}_k,\, \forall \, k \in \mathcal{K}} \label{xvar}
	\addConstraint{\tau_{a}}{\in \mathbb{Z}_{\geq 0},}{\ \forall \, a \in \mathcal{A}.\label{fvar}} 
 \end{mini!} }

The objective is to minimize the sum of fixed and variable costs. Constraints \eqref{1stSM} ensure that one path is selected for each commodity. Constraints \eqref{8thSM} ensure that the required number of capacity units are installed to transport the flow volume assigned to each arc as defined by selected paths.

\subsubsection{Binary Capacity Variables}\label{sect:binReform}
Previous studies \citep{frangioni20090,croxton2003models,croxton2007variable,bonami2014cut} have shown that by reformulating flow and/or capacity variables to use a \textit{multiple-choice} selection scheme can yield stronger linking constraints and other valid inequalities, producing higher-quality LP relaxations. In this scheme, an expanded set of binary selector variables is introduced to replace either an integer capacity and/or a binary flow variable.
We also elect to use a multiple-choice reformulation of INT \eqref{rblpSM}, and later use this reformulation to define a new class of valid inequalities in Section \ref{sec:relax}.

In our multiple-choice reformulation, we replace integer capacity variables $\tau_a$ for arcs $a\in\mathcal{A}$ with a set of binary variables $y_{at}$ that select an integer capacity $t$ from a set $\mathcal{T}_a$ containing allowable non-zero capacity values. We define $\mathcal{T}_a=\{1,\dots,T^{\max}_a\}$ for each arc $a\in\mathcal{A}$, where $T^{\max}_a$ is the maximum number of capacity units that may be necessary on arc $a\in\mathcal{A}$ and is defined as $T^{\max}_a = \left\lceil\frac{\sum_{k\in \mathcal{K}}\delta^k_ad_k}{q_a}\right\rceil$ with $\delta_a^k=1$ if the path set $\mathcal{P}_k$ for commodity $k\in\mathcal{K}$ contains a path that includes arc $a\in \mathcal{A}$ (i.e., $|\{p\in \mathcal{P}_k|p \ni a\}|\geq 1$)
% can use a path that includes arc $a\in\mathcal{A}$ (i.e., $|\{p\in \mathcal{P}_k|p \ni a\}|\geq 1$) 
and $\delta_a^k=0$ otherwise.
Thus, we reformulate the integer capacity variables by making the following replacement:
\begin{equation}\label{eqn:replace}
    \tau_a = \sum_{t\in\mathcal{T}_a}ty_{at}, \quad \forall \, a \in\mathcal{A}.
\end{equation}
The path-based unsplittable MCND problem with multiple-choice binary capacity variables (referred to as BIN) is formulated as follows:
{\small \begin{mini!}[2] 
	{\scriptstyle x,y}{\sum_{k\in\mathcal{K}}\sum_{p \in \mathcal{P}_k}c_pd_kx_p +
% 	+\sum_{l \in \mathcal{L}}\big(A_{l}f_{l}+B_{l}\sum_{k\in \mathcal{K}}\sum_{\{r\in \mathcal{R}_k|r \ni l\}}V_kx_r\big)
	\sum_{a\in \mathcal{A}}f_a\sum_{t\in\mathcal{T}_a}ty_{at}}{\label{rblpB1}}{\label{ObjB1}}
	\addConstraint{\sum_{p \in \mathcal{P}_k}x_p}{=1,}{\  \forall \,  k \in \mathcal{K} \label{1stB1}}
	\addConstraint{\sum_{k\in \mathcal{K}}\sum_{\{p\in \mathcal{P}_k|p \ni a\}}d_kx_p}{\leq q_a\sum_{t\in\mathcal{T}_a}ty_{at},\quad}{  \ \forall \,  a\in \mathcal{A} \label{8thB1}}
	\addConstraint{\sum_{t\in\mathcal{T}_a}y_{at}}{\leq 1,}{\ \forall \, a\in \mathcal{A} \label{1modeB1}}
% 	\addConstraint{\sum_{k\in \mathcal{K}}\sum_{\{r\in \mathcal{R}_k|r \ni l\}}V_kx_r}{\geq Q^{min}_{l}f_{l},}{  \ \forall \,  l\in \mathcal{L} \label{9thSM}}
	\addConstraint{x_p}{\in \{0,1\},}{\ \forall \, p \in \mathcal{P}} \label{xvarB1}
	\addConstraint{y_{at}}{\in \{0,1\},}{\ \forall \, t\in\mathcal{T}_a,\ \forall \, a \in \mathcal{A}.\label{fvarB1}} 
 \end{mini!} }
 
The objective \eqref{ObjB1} and constraints \eqref{1stB1} and \eqref{8thB1} function the same as the objective \eqref{ObjSM} and constraints \eqref{1stSM} and \eqref{8thSM}, respectively. Constraints \eqref{1modeB1} ensure the model selects at most one integer capacity to install on arc $a\in\mathcal{A}$.

While reformulation \eqref{rblpB1} is larger (with $\sum_{a\in\mathcal{A}}(|\mathcal{T}_a|-1)$ more variables and $|\mathcal{A}|$ more constraints) than \eqref{rblpSM}, we will show in Section \ref{sec:compForm} that a commercial optimization solver is able to produce stronger lower bounds when solving %each model. 
\eqref{rblpB1}.
However, it will also be shown that even small instances of these problems cannot be solved within the provided runtime and larger instances continue to have weak lower bounds despite the reformulation. Thus, in the next section, we will study a structured relaxation of the MCND problem to identify a new class of valid inequalities which can produce strong LP relaxations, and ultimately help solvers identify stronger lower bounds.
% as compared to when only adding linking constraints \eqref{eqn:binDA}.

\section{Valid Inequalities} \label{sec:relax}

It is well known that MCND formulations may produce weak LP relaxations (referred to as LP(INT) and LP(BIN) for models \eqref{rblpSM} and \eqref{rblpB1}, respectively) for many practical instances due to the variable upper bound capacity constraints \eqref{8thSM} and \eqref{8thB1}. We provide an illustration of a root cause of this phenomenon in Figure \ref{fig:capConstr} by plotting 
% the installed capacity $u_a \tau_a$ and 
the cost $f_a \tau_a$ of an arc $a$ in INT and LP(INT) given the volume assigned to flow on that arc, $
\sum_{k\in \mathcal{K}}\sum_{\{p\in \mathcal{P}_k|p \ni a\}}d_kx_p$. We see that the cost function for INT is a step function, whereas LP(INT) has a linear cost function with a slope of $\frac{f_a}{q_a}$ (due to continuous $\tau_a$ variables causing constraints \eqref{8thSM} of LP(INT) to hold at equality). Thus, LP(INT) minimizes cost by sending commodities along arcs (similarly, paths composed of a sequence of arcs) with the cheapest marginal cost per unit of demand sent. When the volume sent along an arc constitutes a small proportion of the integer capacity, LP(INT) is a weak approximation of INT. Note the same holds for BIN and LP(BIN).

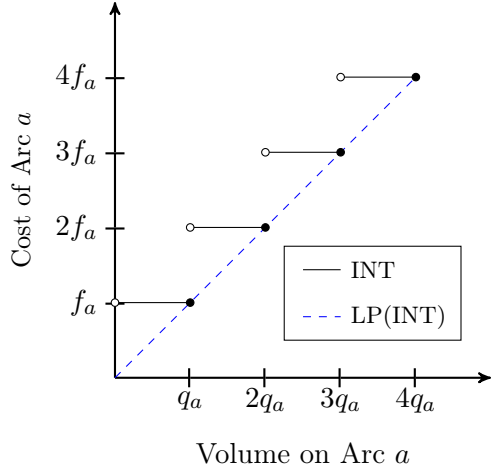
\begin{figure}[htbp]
\centering
\begin{tikzpicture}[x=1cm,y=1cm]

  \draw[-|,thick,>=stealth',shorten >=0pt] (0,0) -- (1,0) ;
  \draw[-|,thick,>=stealth',shorten >=0pt] (1,0) -- (2,0) ;
  \draw[-|,thick,>=stealth',shorten >=0pt] (2,0) -- (3,0) ;
  \draw[-|,thick,>=stealth',shorten >=0pt] (3,0) -- (4,0) ;
  \draw[->,thick,>=stealth',shorten >=0pt] (4,0) -- (5,0) ;
  \draw[-|,thick,>=stealth',shorten >=0pt] (0,0) -- (0,1) ;
  \draw[-|,thick,>=stealth',shorten >=0pt] (0,1) -- (0,2) ;
  \draw[-|,thick,>=stealth',shorten >=0pt] (0,2) -- (0,3) ;
  \draw[-|,thick,>=stealth',shorten >=0pt] (0,3) -- (0,4) ;
  \draw[->,thick,>=stealth',shorten >=0pt] (0,4) -- (0,5) ;
  % \draw[->,thick,>=stealth',shorten >=0pt] (5,0) -- (5,5) ;
  \draw[domain=0:4,variable=\x,dashed,blue] plot ({\x},\x);

  \node[anchor=east] at (-0.1,1) {$f_a$};
  \node[anchor=east] at (-0.1,2) {$2f_a$};
  \node[anchor=east] at (-0.1,3) {$3f_a$};
  \node[anchor=east] at (-0.1,4) {$4f_a$};
  % \node[] at (0,1) {$-$};
  % \node[] at (0,2) {$-$};
  % \node[] at (0,3) {$-$};
  % \node[] at (0,4) {$-$};

  \node[] at (1,-0.3) {$q_a$};
  \node[] at (2,-0.3) {$2q_a$};
  \node[] at (3,-0.3) {$3q_a$};
  \node[] at (4,-0.3) {$4q_a$};

  \draw (0,1) -- (1,1); 
  \draw (1,2) -- (2,2);
  \draw (2,3) -- (3,3);
  \draw (3,4) -- (4,4);
  \draw[fill=white,draw=black] (0,1) circle [radius=0.5mm];
  \draw[fill=white,draw=black] (1,2) circle [radius=0.5mm];
  \draw[fill=white,draw=black] (2,3) circle [radius=0.5mm];
  \draw[fill=white,draw=black] (3,4) circle [radius=0.5mm];
  \draw[fill=black,draw=black] (1,1) circle [radius=0.5mm];
  \draw[fill=black,draw=black] (2,2) circle [radius=0.5mm];
  \draw[fill=black,draw=black] (3,3) circle [radius=0.5mm];
  \draw[fill=black,draw=black] (4,4) circle [radius=0.5mm];

  \node[] at (2.5,-1) {Volume on Arc $a$};
  \node[rotate=90] at (-1.25,2.5) {{\small Cost of Arc $a$}};
  % \node[rotate=-90] at (6.25,2.5) {{\small Cost of Arc $a$}};

  \draw[] (2.25,1.75) rectangle (4.6,0.5);
  \draw[blue,dashed]  (2.5,0.8125) -- (3,0.8125); 
  \node[anchor=west] at (3,0.8125) {{\footnotesize LP(INT)}};
  \draw[] (2.5,1.4375) -- (3,1.4375); 
  \node[anchor=west] at (3,1.4375) {{\footnotesize INT}};

\end{tikzpicture}
\caption{Comparing the cost of volume flowing arc $a$ for INT and LP(INT).}
\label{fig:capConstr}
\end{figure}

% The simplest and most commonly applied valid inequalities (VIs)  MCND models 
A common way to strengthen the MCND LP relaxation is to add the following
% strengthen the formulation by adding 
disaggregated capacity linking constraints 
to LP(INT) and LP(BIN), respectively:
\begin{align}
    t_{ak}^{\min} \sum_{\{p\in \mathcal{P}_k|p \ni a\}}x_p &\leq \tau_a, \quad \forall \, a\in\mathcal{A}, \ \forall \, k\in\mathcal{K},\label{eqn:intDA} \\
    \sum_{\{p\in \mathcal{P}_k|p \ni a\}}x_p &\leq \sum_{t=t_{ak}^{\min}}^{T^{\max}_a}y_{at}, \quad \forall \, a\in\mathcal{A}, \ \forall \, k\in\mathcal{K},\label{eqn:binDA}
\end{align}
where $t_{ak}^{\min}$ is the minimum number of capacity units required to transport commodity $k\in\mathcal{K}$ on arc $a\in\mathcal{A}$ and is defined as $t_{ak}^{\min}=\left\lceil\frac{d_k}{q_a}\right\rceil$. 
If $t^{\min}_{ak}=1$ for all $ a\in\mathcal{A}$ and $ k\in\mathcal{K}$ (i.e., no individual commodity demand exceeds an arc's single-unit capacity), the solutions of LP(INT) and LP(BIN) with the addition of disaggregated capacity linking constraints \eqref{eqn:intDA} and \eqref{eqn:binDA}, respectively, are equivalent.
However, if this is not true and $t_{a'k'}^{\min}>1$ for some $a'\in \mathcal{A}$ and $k'\in \mathcal{K}$, then
LP(BIN) with constraints \eqref{eqn:binDA} can produce a marginally stronger relaxation solution than LP(INT) with constraints \eqref{eqn:intDA}. See 
% Supplementary Material S1
\ref{app:intBinDA} 
for an example. 

Potentially stronger valid inequalities can be generated from simple structured relaxations over more complicated sets, like those focused on the unsplittable flow arc-set polyhedron (i.e., the
convex hull of solutions to capacity constraints of the form \eqref{8thSM} or \eqref{8thB1}) \cite{atamturk2002splittable,AlperChND,chen2021exact,bienstock1998minimum,brockmuller1996designing,van2002polyhedral}. Thus, in the following section, we investigate a simple structured relaxation of BIN \eqref{rblpB1} in which we also focus on the capacity-related constraints, including the single-capacity selection constraints \eqref{1modeB1}.
We will show through examples, and later \rev{show}
% demonstrate 
through a computational study, that the two new classes of valid inequalities developed can produce stronger LP relaxations as compared to when adding constraints \eqref{eqn:binDA} to LP(BIN).

\subsection{Valid Inequalities from Relaxation}
For a given arc $a\in \mathcal{A}$, we consider the following set:
\begin{eqnarray}\label{eq:set}
S:= \left\{ (x, y) \in \{0,1\}^{|\mathcal{P}_a|} \times \{0,1\}^{|\mathcal{T}_a|} \,\left |\, \sum_{p\in \mathcal{P}_a}d_px_p \leq q_a\sum_{t \in\mathcal{T}_a} t y_t ; \sum_{t\in \mathcal{T}_a}y_t\leq 1 \right. \right\},
\end{eqnarray}
where $\mathcal{P}_a = \{p \in \cup_{k\in\mathcal{K}} \mathcal{P}_k|p\ni a\}$, $d_p \in \mathbb{Z}_{+}$ for $p \in \mathcal{P}_a$ and $q_a \in \mathbb{Z}_{+}$. Note that a path $p\in\mathcal{P}_k$ is unique for commodity $k$; thus, to simplify notation, we use $d_p$ in place of $d_k$ for $p\in\mathcal{P}_k$.

We first analyze the convex hull solutions of \eqref{eq:set}, from which we deduce the general form of the constraints defining the set.

\begin{proposition}\label{prop:Nonneg}
There exists non-negative matrices $G, H$ such that 
$$\textup{conv}(S) = \left\{ (x, y) \in [0,\ 1]^{|\mathcal{P}_a|} \times [0,\ 1]^{|\mathcal{T}_a|} \,\left |\, Gx \leq Hy; \sum_{t\in\mathcal{T}_a}y_t\leq 1 \right. \right\}.$$
\end{proposition}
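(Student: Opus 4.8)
The plan is to show that $\mathrm{conv}(S)$ is a polyhedron whose only essential inequalities (besides the box constraints and $\sum_t y_t \le 1$) separate the $x$-variables from the $y$-variables with the correct sign pattern — namely, every facet of $\mathrm{conv}(S)$ not of the form $0 \le x_p \le 1$, $0 \le y_t \le 1$, or $\sum_t y_t \le 1$ can be written $\sum_p G_{ip} x_p \le \sum_t H_{it} y_t$ with $G_{ip}, H_{it} \ge 0$. First I would observe that $S$ is a finite set of $0/1$ points, so $\mathrm{conv}(S)$ is a bounded polyhedron and admits a finite description by facet-defining inequalities; fix one such irredundant description $\{\alpha^i x + \beta^i y \le \gamma^i\}_i$ together with the box constraints. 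The key structural fact to exploit is a \emph{monotonicity} (or free-disposal) property of $S$: if $(x,y) \in S$ then decreasing any $x_p$ to $0$ keeps the point in $S$ (the left side of the capacity constraint only shrinks), and increasing the selected capacity index — i.e. replacing a chosen $y_t = 1$ by $y_{t'} = 1$ for $t' > t$, or turning on some $y_t$ when none is on — also keeps the point in $S$ (the right side only grows, and $\sum_t y_t \le 1$ is preserved).

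Next I would turn these monotonicities into sign constraints on the facet coefficients by the standard perturbation argument. For a facet $F$ with inequality $\alpha x + \beta y \le \gamma$ that is not one of the listed trivial ones, pick an affinely independent set of points of $S$ lying on $F$ spanning it. To show $\alpha_p \ge 0$: if some $\alpha_p < 0$, take a point $(x,y) \in S \cap F$ and note $(x - e_p \cdot x_p,\, y) \in S$ has strictly larger objective value $\alpha x + \beta y$ unless $x_p = 0$ on all points of $F$, which would make $F \subseteq \{x_p = 0\}$, contradicting that $F$ is a proper facet distinct from $\{x_p \ge 0\}$. Symmetrically, for the $y$-coefficients I would use the capacity-increasing move: the natural sign to extract is that replacing $y$ by a "larger" selection cannot increase $\alpha x + \beta y$, which forces the partial sums $\sum_{t \ge s}\beta_t$ to be $\le 0$ for appropriate $s$; combined with $\beta_t$ being handled modulo the constraint $\sum_t y_t \le 1$ (which lets me add a multiple of $-(\sum_t y_t - 1)$ to normalize, say, the largest capacity's coefficient), this yields a representative of the facet in which all $y$-coefficients are $\le 0$. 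Moving them to the right-hand side gives $H_{it} = -\beta_t \ge 0$, and finally I would argue $\gamma = 0$ for these nontrivial facets — a homogeneity point, since $(x,y) = (0, e_{T^{\max}_a})$ or rather $(0,0)$ lies in $S$ and in $F$ whenever $F$ is nontrivial (if $(0,0)$ violated $\alpha x + \beta y \le \gamma$ we'd need $\gamma < 0$, but $(0,0) \in S$ so $\gamma \ge 0$; and if $(0,0)$ is interior to $F$'s defining halfspace strictly, one shows $F$ reduces to a coordinate bound) — so that the inequality is genuinely $Gx \le Hy$ with no constant.

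The main obstacle I anticipate is the bookkeeping around the constraint $\sum_t y_t \le 1$: because $S$ lives in the face-like region where at most one $y_t$ is active, the $y$-coefficients of a facet are only determined up to adding multiples of the all-ones vector on the $y$-block, so "the" sign of $\beta_t$ is not well defined until a normalization is fixed. The clean way to handle this is to work on the affine hull restricted by $\sum_t y_t \le 1$ only where it is tight, or equivalently to first prove the claim for the lower-dimensional pieces $S \cap \{y_t = 1\}$ (on each such piece $y$ is fixed, $S$ becomes a knapsack-type $0/1$ polytope in $x$ alone with free disposal, whose facets have nonnegative coefficients by the classical argument) and then reassemble, checking that gluing these pieces along $\{y=0\}$ produces exactly inequalities of the stated separated form. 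I would also need to confirm that no facet "mixes" a positive $x_p$ coefficient with a positive $y_t$ coefficient in an irreducible way — ruled out precisely by the two monotonicity directions pointing oppositely on the two variable blocks. Once the piecewise analysis and the normalization are pinned down, assembling $G$ and $H$ row by row from the nontrivial facets is routine.
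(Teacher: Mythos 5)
Your proposal has the right ingredients in spirit (down-monotonicity in $x$, the knapsack pieces $S\cap\{y_t=1\}$, a gluing step), but it has a genuine gap exactly at the heart of the statement: showing that the nontrivial inequalities carry \emph{no constant term} and have $y$-coefficients of the right sign. Your argument for $\alpha_p\ge 0$ via flipping $x_p$ to $0$ is fine, but your treatment of the $y$-side rests on two flawed points. First, $\textup{conv}(S)$ is full-dimensional (the points $(\mathbf{0},\mathbf{0})$, $(\mathbf{0},e_t)$ for $t\in\mathcal{T}_a$, and $(e_p,e_{T^{\max}_a})$ for $p\in\mathcal{P}_a$ are affinely independent), so a facet-defining inequality is unique up to positive scaling: there is no freedom to ``add a multiple of $-(\sum_t y_t-1)$'' to normalize coefficients. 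Moreover, pushing positive $\beta_t$'s down that way requires a multiple whose sign makes the combined inequality \emph{stronger}, hence possibly invalid, and in any case it defines a different face. Second, the claim that any facet not containing the origin ``reduces to a coordinate bound'' is asserted, not proved, and the monotonicity moves you list (decrease $x_p$; increase the selected capacity index) do not by themselves yield $\gamma=0$. Note that once $\gamma=0$ is known, $\beta_t\le 0$ is immediate from $(\mathbf{0},e_t)\in S$; so essentially everything hinges on the homogeneity claim, which is precisely what the paper highlights as the nontrivial structural feature (absence of a constant term) and which your sketch does not establish.

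The paper's proof supplies exactly this missing piece, and it matches your fallback idea but carries it out: write $S=(\mathbf{0},\mathbf{0})\cup\bigcup_{t\in\mathcal{T}_a}(S_t,e_t)$ with $S_t$ the single-knapsack sets, take the Balas-type extended (disjunctive) formulation with copies $x^t$, and observe that every constraint involving the $x^t$'s has right-hand side $0$; projecting the $x^t$'s out by Fourier--Motzkin therefore produces only inequalities of the homogeneous form $\widetilde{G}x\le \widetilde{H}y$ alongside the box and simplex constraints. Non-negativity of $\widetilde{H}$ then follows from feasibility of $(\mathbf{0},e_t)$, and negative entries of $\widetilde{G}$ can be zeroed out by the same down-monotonicity argument you use. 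If you want to keep a purely facet-based route, you would need to prove directly that every facet other than the bounds and $\sum_t y_t\le 1$ passes through the origin; as written, your ``gluing the pieces along $\{y=0\}$'' step is where all of that content lives, and it is left unverified.
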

\begin{proof}
Observe that if $y_t = 0$ for all $t \in \mathcal{T}_a$, then $x = 0$. Otherwise for $t \in \mathcal{T}_a$, let: 
$$S_t:= \left\{ x \in \{0,1\}^{|\mathcal{P}_a|}  \, \left|\, \sum_{p\in\mathcal{P}_a}d_p x_p \leq q_at\right. \right\}, $$ and let
$$\textup{conv}(S_t) = \left\{u \geq 0 \,\left|\, s_a^tu \leq r^t \right.\right\}.$$ 
Then it is clear that,
$$S = (\textbf{0}, \textbf{0}) \cup \bigcup_{t \in \mathcal{T}_a} (S_t, e_t).$$
Therefore, we obtain that 
\begin{eqnarray*}
\textup{conv}(S) &=& \textup{proj}_{x, y}(T), \ \textup{where}\\
T &=& \left\{(x, y, x^1, x^2, \dots, x^{T^{\max}_a}) \,\left|\, x^t \geq 0, s_a^tx^t - r^ty_t \leq 0, t \in \mathcal{T}_a; x - \sum_{t \in\mathcal{T}_a} x^t =0; \sum_{t\in \mathcal{T}_a}y_t\leq 1  \right.\right\}.
\end{eqnarray*}
We may apply Fourier-Motzkin elimination to $T$ to project out the variables $x^1, \dots, x^{T^{\max}_a}$. Since these variables only appear in the constraints $x^t \geq 0, s_a^tx^t - r^ty_t \leq 0, t \in \mathcal{T}_a; x - \sum_{t \in\mathcal{T}_a} x^t =0$, where the right-hand-side is $0$ for all the constraints, we arrive at the fact that (after Fourier-Motzkin elimination)
$$\textup{conv}(S) = \left\{ (x, y) \in [0,\ 1]^{|\mathcal{P}_a|} \times [0,\ 1]^{|\mathcal{T}_a|} \,\left |\, \widetilde{G}x \leq \widetilde{H}y; \sum_{t\in\mathcal{T}_a}y_t\leq 1 \right. \right\}.$$
It remains to prove that $\widetilde{G}$ and $\widetilde{H}$ are non-negative. 

We will first consider $\widetilde{G}$. Suppose that $\widetilde{G}_{v,w} <0$ for some $(v,w)$.  We claim that if we replace $\widetilde{G}_{v,w}$ with $0$, the resulting inequality:
\begin{eqnarray}\sum_{p \in \mathcal{P}_a\setminus \{w\}}\widetilde{G}_{v,p}x_p \leq \widetilde{H}_vy\label{eq:updeq}
\end{eqnarray} is still valid for $S$. This will prove the result, since replacing $\widetilde{G}_{v,w} $ by $0$ results in a stronger inequality that dominates the original inequality (since $x_w\geq 0$). Suppose $(\hat{x}, \hat{y}) \in S$  and this point does not satisfy (\ref{eq:updeq}). First note that $\hat{x}_w = 1$, since if $\hat{x}_w = 0$, then  $\sum_{p \in \mathcal{P}_a\setminus \{w\}}\widetilde{G}_{v,p}\hat{x}_p = \sum_{p \in \mathcal{P}_a}\widetilde{G}_{v,p}\hat{x}_p$. However, now observe that $(\widetilde{x}, \hat{y}) \in S$, where:
\begin{eqnarray*}
\widetilde{x}_p = \left\{\begin{array}{rl} \hat{x}_p, & p \neq w\\ 0, & p = w \end{array}. \right.
\end{eqnarray*}
Therefore, $\widetilde{H}_v\hat{y} \geq  \sum_{p \in \mathcal{P}_a}\widetilde{G}_{v,p}\widetilde{x}_p = \sum_{p \in \mathcal{P}_a\setminus \{w\}}\widetilde{G}_{v,p}\widetilde{x}_p= \sum_{p \in \mathcal{P}_a\setminus \{w\}}\widetilde{G}_{v,p}\hat{x}_p$. Thus, $(\hat{x}, \hat{y}) $  satisfies (\ref{eq:updeq}), a contradiction.

We now verify that $\widetilde{H}$ is non-negative. Note that $(\textbf{0}, e_p) \in S$ for all $p \in \mathcal{P}_a$. Therefore, if $H_{v,k} < 0$, we have that the point $(\textbf{0}, e_k)$ will not satisfy the inequality $\sum_{p \in \mathcal{P}_a}\widetilde{G}_{v,p}x_p \leq \widetilde{H}_vy$. This concludes the proof. 
\end{proof}

It is important to note that the general form of the constraints lacks a constant term in $Gx\leq Hy$, unlike the inequality introduced in \cite{chen2021exact}. The reasons for this are twofold: (i) arcs do not have pre-existing capacities, which would introduce a constant term to the right-hand side of \eqref{8thB1}, and (ii) our reformulation imposes that each arc has a uniquely defined capacity value such that only one $y_{t}>0$ for all $t\in\mathcal{T}_a$. If either condition were not true, a constant term would be required. \rev{Under these assumptions, the feasible region is monotone and therefore feasibility is preserved when additional capacity is installed (increasing $y$) or when commodities are removed (decreasing $x$). 
As a result, all inequalities describing $\text{conv}(S)$ can be written with non-negative coefficients. 
If pre-installed capacity or multiple simultaneous capacity levels were allowed, this monotonic structure would no longer hold, and the resulting inequalities could include constants or even negative coefficients.  
}

In the following sections, we describe new valid inequalities for $S$ that \rev{are derived from the structural form identified in}
% are motivated by 
Proposition \ref{prop:Nonneg}. 

\subsubsection{Single-Arc Commodity Packing Constraints}\label{sec:01VI}

Since $G$ is a non-negative matrix, we first explore a subset of constraints which involve only $\{0,1\}$ coefficients for the path variables $x$. 
% Let matrix $G$ be a binary matrix, where each $x$ variable has a coefficient of either 0 or 1. 
In this case, since both $x$ and $y$ are binary variables, each element in $H$ is bounded above by $|\mathcal{P}_a|$, as the left-hand side cannot exceed this value and at most one $y$ variable on the right-hand side can be nonzero. 

For a given binary vector $g$ corresponding to the \rev{coefficients of the} $x$ \rev{variables}, \rev{the coefficient} $h_t$ for $y_t$ \rev{can be determined} by counting the maximum number of \textit{commodities} with at least one path selected in $g$ (i.e., $x$ variables \rev{with $g_p=1$}) that can be transported using, or \textit{packed} into, $t$ capacity units. We emphasize the number of commodities \rev{rather than} the number of paths, \rev{since} commodities \rev{may have multiple feasible} paths in $\mathcal{P}_a$ but should not be counted more than once.  

\rev{The inequalities derived here are closely related to the cover inequalities for the 0-1 knapsack polytope \citep{balas1978facets,wolsey1975faces}.
In our setting, the left-hand side represents the number of commodities included in the packing relation, while the coefficients on the right-hand side capture the maximum number of such commodities that can fit within $tq_a$ capacity units. 
}

We state this result formally next.
% We now state this result \rev{in its general form, allowing integer coefficients for the path variables.}

\begin{proposition}\label{prop:alphas}
\rev{For a given arc $a\in\mathcal{A}$,} consider a non-zero 
binary vector $g\;\rev{\in\{0,1\}^{|\mathcal P_a|}}$. 
% \rev{integer vector $g\in\{0,1,\dots,B\}^{|\mathcal P_a|}$}. 
The inequality
\begin{eqnarray}\label{eq:alpha}\sum_{p \in \mathcal{P}_a} g_px_p \leq \sum_{t \in \mathcal{T}_a} \alpha_t y_t,
\end{eqnarray}
is a valid inequality for $S$ if
\begin{eqnarray}\label{def:alpha}\alpha_t = \textup{max}\left\{\sum_{p \in \mathcal{Z}} \rev{g_p} x_p\,\left |\, \sum_{p \in \mathcal{Z}}d_px_p \leq tq_a,\; \rev{\sum_{p\in\mathcal{Z}_k}x_p\leq 1,}\; x_p \in \{0, 1\} \right. \right\},
\end{eqnarray}
where  $\mathcal{Z} = \{ p \in \mathcal{P}_a \,|\, g_p=1 \}$ \rev{ and $\mathcal{Z}_k=\mathcal{Z}\cap \mathcal{P}_k$ for each commodity $k$}. 
\end{proposition}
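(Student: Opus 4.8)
The plan is to show that any feasible point $(\hat x,\hat y)\in S$ satisfies \eqref{eq:alpha}, by considering the only two possibilities allowed by the constraint $\sum_{t\in\mathcal T_a}y_t\le 1$. First I would dispose of the trivial case: if $\hat y_t=0$ for all $t\in\mathcal T_a$, then the capacity constraint $\sum_{p\in\mathcal P_a}d_p\hat x_p\le q_a\sum_t t\hat y_t=0$ together with $d_p\in\mathbb Z_+$ forces $\hat x_p=0$ for every $p$ with $d_p>0$; since all commodities have positive demand, $\hat x=0$ and both sides of \eqref{eq:alpha} vanish. The substantive case is when exactly one capacity level is chosen, say $\hat y_{t^\star}=1$ and $\hat y_t=0$ for $t\ne t^\star$; then the right-hand side of \eqref{eq:alpha} equals $\alpha_{t^\star}$, and the capacity constraint reads $\sum_{p\in\mathcal P_a}d_p\hat x_p\le t^\star q_a$.

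The key step is then to argue that $\sum_{p\in\mathcal P_a}g_p\hat x_p\le\alpha_{t^\star}$. The idea is that the restriction $\hat x|_{\mathcal Z}$ (the components of $\hat x$ indexed by $p$ with $g_p=1$) is itself a feasible solution to the maximization problem \eqref{def:alpha} defining $\alpha_{t^\star}$. Indeed, $\hat x|_{\mathcal Z}\in\{0,1\}^{|\mathcal Z|}$; it satisfies $\sum_{p\in\mathcal Z}d_p\hat x_p\le\sum_{p\in\mathcal P_a}d_p\hat x_p\le t^\star q_a$ since we only dropped nonnegative terms; and it satisfies the per-commodity constraints $\sum_{p\in\mathcal Z_k}\hat x_p\le 1$ because $\hat x$ comes from a point of $S$ that (implicitly, via the modeling of $\mathcal P_a$ and the convention that a commodity uses one path) respects $\sum_{p\in\mathcal P_k}\hat x_p\le 1$, and $\mathcal Z_k\subseteq\mathcal P_k$. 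Since $\alpha_{t^\star}$ is the maximum objective value over exactly this feasible region and $\sum_{p\in\mathcal Z}g_p\hat x_p=\sum_{p\in\mathcal P_a}g_p\hat x_p$ (terms with $g_p=0$ contribute nothing), we conclude $\sum_{p\in\mathcal P_a}g_p\hat x_p\le\alpha_{t^\star}=\sum_{t\in\mathcal T_a}\alpha_t\hat y_t$, which is \eqref{eq:alpha}.

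The one place requiring care — and the main obstacle, though a mild one — is justifying the per-commodity packing constraint $\sum_{p\in\mathcal Z_k}\hat x_p\le 1$ from membership in $S$ alone. The set $S$ as written in \eqref{eq:set} only imposes $\sum_{t\in\mathcal T_a}y_t\le 1$ and the single aggregated capacity inequality; it does not literally contain the constraints $\sum_{p\in\mathcal P_k}x_p\le 1$. I would address this by noting that $S$ is the arc-$a$ relaxation of BIN \eqref{rblpB1}, which does enforce \eqref{1stB1}, so the inequality \eqref{eq:alpha} is intended as valid for the projection of BIN onto the $(x,y)$ variables associated with arc $a$, i.e.\ for $S$ intersected with $\{\sum_{p\in\mathcal P_k}x_p\le 1\ \forall k\}$; alternatively, and more cleanly, I would observe that even without that constraint the bound still holds after replacing $\alpha_t$ by the (weakly larger) value obtained by dropping the per-commodity constraint from \eqref{def:alpha}, so the stated $\alpha_t$ with the constraint yields a valid — indeed stronger — inequality for the fully-constrained set. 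Either way the argument above goes through verbatim once the ambient feasible set is pinned down. Beyond this bookkeeping, the proof is a direct "restrict a feasible point to a subproblem and invoke optimality of $\alpha_t$" argument, with no real computation.
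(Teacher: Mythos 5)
Your proposal is correct and takes essentially the same route as the paper's proof: dispose of the all-zero case, then restrict a feasible point with $\hat y_{t^\star}=1$ to the packing problem defining $\alpha_{t^\star}$ and invoke its optimality. Your added care about the per-commodity constraint $\sum_{p\in\mathcal Z_k}\hat x_p\le 1$ is a genuine refinement, since the paper's proof applies the definition \eqref{def:alpha} without noting that $S$ as written in \eqref{eq:set} does not itself enforce single-path selection; either of your fixes (working on $S$ intersected with the path-selection constraints inherited from \eqref{1stB1}, or weakening $\alpha_t$ by dropping that constraint) closes this gap.
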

\begin{proof}
    Clearly (\ref{eq:alpha}) is satisfied by $(0, 0)$. Now consider any non-zero solution
    $(\hat{x}, \hat{y}) \in S$ with  $\hat{y}_t = 1$ for $t = \hat{t}$ and $\hat{y}_t =0$ for $t\neq \hat{t}$.
    Then, by the definition of $S$, we have that 
    $\sum_{p \in \mathcal{P}_a} d_p\hat{x}_p  \leq \hat{t}q_a.$ Therefore, we obtain that $\sum_{p \in \mathcal{Z}} d_p\hat{x}_p  \leq \sum_{p \in \mathcal{P}_a} d_p\hat{x}_p \leq \hat{t}q_a.$ Now by the definition of $\alpha_{t}$ in (\ref{def:alpha}), we have that $\alpha_{\hat{t}} \geq \sum_{p \in \mathcal{Z}}\rev{g_p}\hat{x}_p.$ In other words, we obtain that
    $$\sum_{p\in \mathcal{P}_a}g_p\hat{x}_p = \sum_{p \in \mathcal{Z}}\rev{g_p}\hat{x}_p \leq \alpha_{\hat{t}}  = \sum_{t \in \mathcal{T}_a}\alpha_t \hat{y}_t,$$
    that is (\ref{eq:alpha}) is satisfied by 
    $(\hat{x}, \hat{y})$. 
\end{proof}

\rev{Although $g_p=1$ for all $p\in\mathcal Z$ for the binary case, we keep the coefficient $g_p$ in the notation for consistency with the generalized case discussed later.}

Using Proposition \ref{prop:alphas},
% given a set $\mathcal{Z}=\{p \in \mathcal{P}_a \,|\, g_p = 1\}$, 
we can define the following \textit{single-arc commodity packing} (SAC-Pack) constraints:
\begin{equation}\label{eq:01valid}
    \sum_{p\in\mathcal{Z}}x_p\leq \sum_{t\in\mathcal{T}_a}\alpha_ty_t, \quad \forall \, a\in\mathcal{A},
\end{equation}
where $\alpha_t$ is an integer coefficient %representing the number of selected 
equal to the maximum number of selected commodities (those with at least one path $p\in\mathcal{Z}$) that can be transported by $t\in\mathcal{T}_a$ capacity units. In a sense, constraints \eqref{eq:01valid} are a ``smart re-aggregation" of linking constraints \eqref{eqn:binDA} for a set of commodities with at least one path $p\in\mathcal{Z}$, and can at times even dominate them.

\begin{example}
    Consider the following example where we \rev{illustrate}
    % demonstrate 
    how SAC-Pack constraints \eqref{eq:01valid} can dominate disaggregated linking constraints \eqref{eqn:binDA}.

    Assume $\mathcal{P}_a=\mathcal{Z}=\{1,2\}$. Let the solution of LP(BIN) be $x_1=0.5$ and $x_2=0.4$ and let $d_1=60$ and $d_2=70$ with $q_a=100$. We will also assume that $T^{\max}_a = 2$ for clarity of exposition. Constraints \eqref{8thB1} set the capacity of arc $a$ as $y_{a1} = 0.58$ units (similarly, $y_{a1} + 2y_{a2} = 0.58$). To strengthen LP(BIN), we add the following constraints \eqref{eq:01valid}:
    $$x_1+x_2\leq y_{a1}+2y_{a2}.$$
    This increases the installed capacity to $0.9$, whereas if we only add the following constraints \eqref{eqn:binDA}:
    \begin{align*}
        x_1&\leq y_{a1} + y_{a2},\\
        x_2&\leq y_{a1} + y_{a2},
    \end{align*}
    $y_{a1}=0.58$ satisfies both constraints, and the solution of LP(BIN) does not change.
    
\end{example}

We note that the process of determining the coefficients given by (\ref{def:alpha}) can be done efficiently by sorting the selected commodities in order of non-decreasing demand values, adding demands until the capacity is exceeded, and setting the coefficient to the number of demands added minus one (as the last demand cannot be transported). \rev{A dynamic program (DP) can also be used to compute these coefficients, but for the SAC-Pack constraints we found that a separation IP solved faster in our experiments. Such a DP becomes more useful in the more general setting discussed later.} We next formally define this \rev{IP-based} separation routine to identify violated constraints \eqref{eq:01valid}.

\paragraph{Separation}
Given the set $S$ described in \eqref{eq:set} and a point $(x^*,y^*)$, there exists a SAC-Pack constraint \eqref{eq:01valid} violated by $(x^*,y^*)$ if and only if there exists a set $\mathcal{Z}\subseteq \mathcal{P}_a$ such that $\sum_{p\in\mathcal{Z}}x_p^* - \sum_{t\in\mathcal{T}_a}\alpha_ty_{at}^* > 0$. Because we can determine the $\alpha$ values in closed form (see Proposition \ref{prop:alphas}), we can explicitly check for such a violation by formulating the problem as an IP with the objective $\max_{\mathcal{Z}\subseteq\mathcal{P}_a}\{\sum_{p\in\mathcal{P}_a}z_px_p^* - \sum_{t\in\mathcal{T}_a}\alpha_ty_{at}^*\}$.
In this formulation, binary variables $z_p$ indicate whether path $p$ is selected to be in $\mathcal{Z}$ and integer variable $\alpha_t$ represents the number of selected commodities that can be transported by $t\in\mathcal{T}_a$ capacity units. 
If the objective value is greater than 0, we add the violated constraint to the model.

\begin{table}[!htb]
\footnotesize
\centering
\caption{SAC-Pack separation problem variable definitions.}
\begin{tabular}{lp{11cm}}
\hline
\textbf{Variable} 	& \textbf{Description}\\
\hline
$z_{p}\in \{0,1\}$ & Indicate whether path $p \in \mathcal{P}_a$ is selected for set $\mathcal{Z}\subseteq \mathcal{P}_a$.\\
$\alpha_t\in \Z_{\geq0}$ & Coefficient of $y_t$ representing the number of selected commodities that can be transported by $t\in\mathcal{T}_a$ capacity units.\\
$s_{kt}\in \{0,1\}$ & Indicate commodity $k\in\mathcal{K}_a$ is in the maximum subset that can be transported by $t\in\mathcal{T}_a$ capacity units.\\
$u_t\in \{0,1\}$ & Indicate if all selected commodities can be transported by $t\in\mathcal{T}_a$ capacity units (deactivates constraints).\\
$v_t\in \{0,1\}$ & Indicate if not all selected commodities can be transported by $t\in\mathcal{T}_a$ capacity units (activates constraints).\\
$w_{kt}\in \{0,1\}$ & Indicate if a selected commodity $k\in\mathcal{K}_a$ has at least one path selected but not in the subset of commodities that can be transported by $t\in\mathcal{T}_a$ capacity units.\\
% $\tau_{a}\in \mathbb{Z}_{\geq 0}$ & Number of trailers dispatched per time on arc $a\in \mathcal{A}$.\\
\hline
\end{tabular}
\label{tbl:variablesSep} 
\end{table}

Let the set $\mathcal{K}_a\subseteq \mathcal{K}$ be defined as the set of commodities with at least one path containing arc $a$ (or $\{k\in\mathcal{K}||\{p\in \mathcal{P}_k|p\ni a\}|\geq 1\}$)
% transported by arc $a$ in the current LP solution (or 
% $|\{p\in \mathcal{P}_k|p\ni a,\, x^*_p > 0\}|\geq 1$ 
% $\{k\in \mathcal{K}|\sum_{\{p\in\mathcal{P}_k|p\ni a\}} x^*_p > 0\}$) 
in order of non-decreasing demands.
We define the decision variables for the separation problem in Table \ref{tbl:variablesSep}.
% be defined as the demand of commodity $k\in\mathcal{K}_a$ in units of arc capacity (i.e., $q_k=\frac{d_k}{u_a}$).  
We can now formulate the separation problem as: 

{\small
\begin{maxi!}[2]
	{}{\sum_{p \in \mathcal{P}_a} z_px^*_p - \sum_{t\in \mathcal{T}_a}\alpha_ty^*_{at}}{\label{sep}}{\label{ObjSep}}
	\addConstraint{\alpha_t \geq }{\sum_{k\in \mathcal{K}_a}s_{kt} - 1 + u_t,}{\  \forall \, t \in \mathcal{T}_a \label{1stSep}}
	\addConstraint{s_{kt} \leq }{\sum_{\{p\in \mathcal{P}_k|p \ni a\}} z_p,}{  \ \forall \, t \in \mathcal{T}_a, \, \forall \, k\in \mathcal{K}_a  \label{2ndSep}}
	\addConstraint{t(1-u_t)+\epsilon}{\leq \sum_{k \in \mathcal{K}_a} \frac{d_k}{q_a} s_{kt},}{\ \forall \, t \in \mathcal{T}_a \label{3rdSep}}
        % \addConstraint{f + \left(-f+\sum_{k \in \mathcal{K}_a} q_k\right) u_f}{\geq \sum_{k \in \mathcal{K}_a} q_k,}{\ \forall \, f \in \mathcal{T}_a \label{3rdSep}}
	\addConstraint{z_p - s_{kt}}{\leq w_{kt},}{  \ \forall \,  t \in \mathcal{T}_a, \, \forall \, k\in \mathcal{K}_a, \, \forall \, p \in \{\mathcal{P}_{a}\cap \mathcal{P}_k\}\label{4thSep}}
	\addConstraint{\sum_{k\in\mathcal{K}_a}w_{kt}}{\leq |\mathcal{K}_a|v_t,}{\ \forall \, t \in \mathcal{T}_a} \label{5thSep}
        \addConstraint{u_t}{\leq 1-v_t,}{\ \forall \, t \in \mathcal{T}_a \label{6thSep}}
        % \addConstraint{\left(\sum_{k\in\mathcal{K}_a}q_k - f\right)(1-u_f)}{\geq \sum_{k\in\mathcal{K}_a}q_k s_{kf} - f,}{\ \forall \, f \in \mathcal{T}_a \label{7thSep}}
        \addConstraint{\left|\mathcal{K}_a\setminus \{k\leq k_1\}\right|(s_{k_1t}+1-w_{k_1t})}{\geq \sum_{\{k_2 \in \mathcal{K}_a|k_2>k_1\}} s_{k_2t},\quad}{\ \forall \, t \in \mathcal{T}_a, \, \forall \, k_1\in \mathcal{K}_a\label{7thSep}}
	\addConstraint{z_{p}}{\in \{0,1\},}{\ \forall \, p \in \mathcal{P}_a} 
        \addConstraint{u_t\in \{0,1\},}{\ v_t\in \{0,1\}, \ \alpha_t \in \Z_{\geq0},}{\ \forall \, t \in \mathcal{T}_a} 
        \addConstraint{s_{kt}\in \{0,1\},}{\ w_{kt}\in \{0,1\},}{\ \forall \, k_1\in \mathcal{K}_a, \, \forall \, t \in \mathcal{T}_a.} 
 \end{maxi!} }
 
Constraints \eqref{1stSep} capture the value of each $\alpha_t$ coefficient by summing the commodities selected for the subset. The 1 is subtracted because the $s_{kt}$ variables are collected until the total demand exceeds $t$ capacity units; thus, the last commodity added should be removed to make it feasible. However, if the total volume of the commodities is less than $t$ capacity units, all commodities should be counted; thus, we add the binary variable $u_t$ which is equal to 1 when this is the case. In the case where all commodities have $s_{kt}=1$ but the total volume exceeds $t$, $u_t$ can and still will equal $0$ because the objective minimizes the $\alpha_t$ coefficients. Constraints \eqref{2ndSep} enforce that commodities cannot be selected for the maximum subset that fits in $t$ capacity units if none of its paths were selected. Constraints \eqref{3rdSep} determine the number of commodities plus 1 that can be transported by $t$ capacity units. The constraints are turned off if the total commodity volume is less than $t$ capacity units. The $\epsilon$ parameter ensures that the right-hand side does not equal $t$, as we later subtract 1, assuming that the volume is exceeded. Thus, we note that for any $\epsilon>0$, the generated inequality is valid, with larger values of $\epsilon$ possibly making the resulting inequality weaker. 
% \textcolor{red}{How is $\epsilon$ selected?} 
In our computational study, we set $\epsilon=0.001$, which is sufficiently small to avoid weakening the constraints
% . We chose this $\epsilon$ value because it is less than the smallest commodity's fractional capacity value for each arc $a\in\mathcal{A}$ 
because $\epsilon<\min_{k\in\mathcal{K}_a}\{d_k/q_a\}$. Constraints \eqref{4thSep} determine if at least one path for commodity $k$ was selected but is not included in the subset of commodities that can be transported by $t$ capacity units. Note that here we use $\{\mathcal{P}_a\cap \mathcal{P}_k\}$ to represent the set of paths for commodity $k\in\mathcal{K}_a$ that contain arc $a$. Constraints \eqref{5thSep} set $v_t = 1$ if any $w_{kt} = 1$ as set by constraints \eqref{4thSep}. Constraints \eqref{6thSep} ensure that $u_t=0$ if the size of the subset of commodities that can be transported by $t$ capacity units, plus 1, is less than the total number of commodities whose paths were selected. Finally, constraints \eqref{7thSep} are precedence constraints for each capacity $t$ that ensure commodities are selected in order of non-decreasing demands, if selected at all.

To reduce the size of the separation problem, one can consider only the paths and capacity variables with non-zero values in $(x^*,y^*)$. The coefficients for variables $y^*_t = 0$ can be calculated once the paths are selected for the constraint as previously described. Another way to reduce the problem size is to aggregate paths for the same commodity. That is, let $z_k$ be a binary variable that indicates if all non-zero paths in $\mathcal{P}_a$ for commodity $k\in\mathcal{K}_a$ are selected for the set $\mathcal{Z}$. The objective changes to $\sum_{k\in\mathcal{K}_a}z_k\sum_{p\in\{\mathcal{P}_a\cap\mathcal{P}_k\}}x^*_p-\sum_{t\in \mathcal{T}_a}\alpha_ty^*_{at}$, the number of variables is reduced by $|\mathcal{P}_a|-|\mathcal{K}_a|$, and the number of constraints is reduced by $|\{\mathcal{P}_a\cap\mathcal{P}_k\}|-1$ for each $k\in\mathcal{K}_a$. 

\subsubsection{Generalized Single-Arc Commodity Packing Constraints}\label{sec:intVI}
We now investigate the case in which the elements of $G$ (or coefficients of the $x$  variables) 
% we use inequalities where the coefficients for the $x$ variables 
% are in the list
\rev{can take integer values in} $\{0, 1, \dots, B\}$. %$G$ is an integral matrix bounded above by $B$. 
\rev{The result in Proposition~\ref{prop:alphas} applies directly to this generalized setting, where $g_p \in \{0,1,\dots,B\}$ represents integer coefficients for the path variables. The proof remains identical, as the argument does not depend on the specific values of $g_p$ beyond their nonnegativity. 
We next discuss the usefulness of these generalized inequalities and how they can strengthen the binary case through}
% To motivate the use of such more general inequalities, consider
the following example.
% To understand why allowing $x$ variables to have integer coefficients greater than one may be beneficial, consider the following simple motivating example.
\begin{example}
    In this example, we \rev{illustrate}
    % demonstrate 
    how constraints \eqref{eq:01valid} can be strengthened to cut off solutions in some directions if we allow $x$ variables to have coefficients in the list $\{0, 1, \dots, B\}$.

    Let $q_a=100$ and $\mathcal{Z}=\{1,2,3,4\}$ with $d_1=30,\, d_2=30,\, d_3=30,$ and $d_4=60$, where each $p\in\mathcal{Z}$ is for a unique commodity. Assuming $T^{\max}_a=2$, constraints \eqref{eq:01valid} take the form:
    $$x_1+x_2+x_3+x_4\leq 3y_{a1}+4y_{a2}.$$
    Here, the three smaller commodities can be transported using one unit of capacity, whereas transporting all four requires two units. 
    % If $G$ were integral with $B>1$, 
    If $x$ variables had coefficients in the list $\{0, 1, \dots, B\}$, 
    we could strengthen the previous inequality for the single-unit capacity case (i.e., when $y_{a1}>0$) by increasing the coefficient for commodity 4:
    % a stronger inequality would be:
    % \textcolor{red}{Can we say why this is stronger?} 
    $$x_1+x_2+x_3+2x_4\leq 3y_{a1}+5y_{a2}.$$
    \rev{This adjustment reflects the fact that commodity 4} can only be transported with at most one other commodity when only one unit of capacity is installed. 
    % \lmg{(e.g., the solution $x_1=x_2=x_4=1$ is cut off). 
    Notice that we also increased the coefficient for $y_{a2}$ by 1; thus, when two units of capacity are installed, the constraints are equivalent in that they allow for all four commodities to be transported.

    To \rev{illustrate}
    % demonstrate 
    how some solutions may be cut off, let the solution of LP(BIN) be $x_1=x_2=x_3=x_4=0.5$, setting $y_{a1}=0.75$. This solution remains feasible with constraints \eqref{eq:01valid}, as the left-hand side is $2$ and the right-hand side is $2.25$. However, if you increase the coefficient of $x_4$ to $2$, this solution is no longer feasible, as the left-hand side is now $2.5$. Thus, $y_{a1}$ is increased to $0.83$.
    
\end{example}

\rev{Following Proposition \ref{prop:alphas},} the \textit{generalized single-arc commodity packing} (Gen-SAC-Pack) constraints \rev{take the form}:
% as follows:
\begin{equation}\label{eq:intValid}
    \sum_{p\in\mathcal{Z}}\theta_px_p\leq \sum_{t\in\mathcal{T}_a}\alpha_ty_t, \quad \forall \, a\in\mathcal{A}, 
\end{equation}
where $\theta_p$ are non-negative integer coefficients for paths $p\in\mathcal{Z}$ bounded above by $B$. 

\rev{We use two methods to identify Gen-SAC-Pack constraints \eqref{eq:intValid}: a post-processing procedure that strengthens existing SAC-Pack constraints and a separation approach that generates new Gen-SAC-Pack inequalities directly.}

\paragraph{Post-processing SAC-Pack constraints}
% Motivated by the previous example, 
A simple method to obtain constraints of the form \eqref{eq:intValid} is to post-process the SAC-Pack constraints found using \eqref{sep}. The idea is to identify paths (or commodities) which cannot be transported with $\alpha_t -1$ other commodities for $t = \min\{t\in\mathcal{T}_a\,|\,y_t > 0\}$. Recall the example above, the largest commodity demand (represented by $x_4$) could not be transported with two other commodities using only one unit of capacity; thus, its coefficient was increased by one. This idea can also be extended to more than one path with an increased coefficient. 
\rev{Coefficient increases of this kind are related to techniques in the general literature on lifting and strengthening cover inequalities \citep{marchand2002cutting} as well as to the coefficient-increase procedures developed by \citeauthor{dietrich1992tightening}, \citeyear{dietrich1992tightening}.} 
In Section \ref{sec:comp}, we will \rev{show}
% demonstrate
 that this simple approach is able to further strengthen LP(BIN) with constraints \eqref{eq:01valid} with a marginal increase in runtime.

\paragraph{Separation} To obtain potentially stronger cuts, we use the following separation approach to define new Gen-Sack-Pack constraints of the form \eqref{eq:intValid}.

We are given the set $S$ described in (\ref{eq:set}) and a point $(x^*, y^*)$ that we want to separate from the convex hull of $S$. We obtain separating inequalities using the following row-generation approach \citep{boyd1994fenchel}.
\begin{enumerate}
\item Let $\{x^j, y^j\}_{j = 1}^r$ be a subset of points in $S$. 
% \textcolor{red}{ Did we restrict $\theta$ to be integer-valued in our experiments?} \lmg{We restricted both $\theta$ and $\alpha$ to be integer..}
\item Solve: 
\begin{eqnarray}\label{eq:SEPmaster}
\begin{array}{rcl}
\textup{SEPVAL}:= &\textup{max}_{\theta, \alpha} & \theta^{\top} x^* - \alpha^{\top}y^*\\
&\textup{s.t.}& \theta^{\top} x^j - \alpha^{\top}y^j \leq 0, \ \forall \ j \in [r], \\
&& \alpha_t \leq \alpha_{t +1}, \ \forall \ t \in \mathcal{T}_a\setminus\{T^{\max}_a\},\\
&& \theta_p \leq B, \ \forall \ p \in \mathcal{P}_a, \\
% && \theta, \alpha \geq 0.  \\
&& \theta\in \Z_{\geq 0},\ \alpha \in \Z_{\geq 0}.  \\
\end{array}
\end{eqnarray}
If $\textup{SEPVAL} \leq 0$, then STOP as there is no separating inequality. Else, let $(\theta^*, \alpha^*)$ be an optimal solution of (\ref{eq:SEPmaster}).
\item Solve:
\begin{eqnarray}\label{eq:SEPchild}
\begin{array}{rcl}
\textup{FEASVAL}:= &\textup{max}_{x,y}& \theta^{* \top} x - \alpha^{* \top}y\\
&\textup{s.t.}& (x,y) \in S.
\end{array}
\end{eqnarray}
If $\textup{FEASVAL} = 0$\footnote{Note $\textup{FEASVAL} \geq 0$ since $(0,0)\in S$}, then STOP as $\theta^{*\top}x \leq \alpha^{*\top}y$ is a separating inequality. Else, let $(\hat{x}, \hat{y})$ be an optimal solution of (\ref{eq:SEPchild}). Let $r \leftarrow r +1$, let $(x^r, y^r) \leftarrow (\hat{x}, \hat{y})$ and return to STEP 2 above.  
\end{enumerate}
Note that parameter $B$ in \eqref{eq:SEPmaster} above is an artificial upper bound. In Section \ref{sec:comp}, we will \rev{show}
% demonstrate
that using $B \leq 3$ gives high-quality cuts in comparison to larger upper bounds. As in the separation problem defined in Section \ref{sec:01VI}, paths for the same commodity can be aggregated into a single variable to help reduce required compute time.  See 
% Supplementary Material S2
\ref{app:dp} 
for the dynamic programming (DP) approach we use in our computational experiments to further decrease the runtime of solving \eqref{eq:SEPchild}. 
While the previously-defined row-generation approach \rev{and DP} with $B=1$ can also be used to identify SAC-Pack constraints \eqref{eq:01valid}, it requires more compute time as compared to solving model \eqref{sep}. Thus, we only employ this separation method when $B>1$. 

We also note that while we focus on the \textit{path-based}, unsplittable MCND problem in this work, the previous valid inequalities \eqref{eq:01valid} and \eqref{eq:intValid} and their separation routines can also be applied to \textit{arc-based}, unsplittable MCND problems. In this case, the set $\mathcal{Z}$ would consist of the commodity-specific binary arc-flow variables. 
We \rev{show}
% demonstrate
this later in Section \ref{sec:comp} by generating valid inequalities \eqref{eq:01valid} and \eqref{eq:intValid} for arc-based, unsplittable MCND models for the Canad instances \citep{CanadInst}.
% and compare the improved LP bound to that of the one produced by adding only disaggregated linking constraints, as well as bounds found in previous work. 

% \section{Helper Metric Inequalities} 
\section{\rev{Algorithm Engineering}}  \label{sec:helper}
\rev{In this section, we develop an algorithmic framework that combines our SAC-Pack and Gen-SAC-Pack inequalities with a procedure for generating and selecting metric inequalities via Lagrangian relaxation. We begin by introducing the metric inequalities used in our approach and how they are constructed, then describe the Lagrangian selection mechanism.
% that enables iterative refinement. 
The section concludes by combining these elements into a framework we will test  in our computational study.}

\subsection{\rev{Metric Inequalities}}
\rev{We begin by revisiting metric inequalities, which are well-known valid inequalities for the multicommodity flow polytope \citep{AlperChND, bienstock1998minimum, costa2009benders}. Metric inequalities arise from the dual of a multicommodity flow formulation and capture, in a single constraint, the aggregate capacity required to support all simultaneous commodity flows. By assigning non-negative weights to the arcs, one defines a distance metric on the network; under this metric, the distance between any two nodes is the length of the shortest path connecting them. A metric inequality asserts that the total weighted capacity of the network must be at least as large as the sum of each commodity’s demand multiplied by the distance from its origin to its destination. Consequently, these inequalities capture global structural constraints, generalizing classical min-cut/max-flow bounds, that any feasible multicommodity flow must satisfy.

Metric inequalities are redundant when added to our BIN formulation. However, we observe in computational tests that including a selected set of metric inequalities constructed from an LP-feasible capacity vector leads to stronger dual bounds and improved overall solver performance on our MCND instances.\footnote{To show that the benefits are not specific to one commercial solver, we also provide limited results for two other solvers in 
% Supplementary Material S3
\ref{app:solvers}.} This behavior is consistent with findings in the literature showing that certain structured aggregations can enhance LP-based bounding procedures \citep{bodur2018aggregation}. Motivated by these observations, we incorporate metric inequalities as “helper” inequalities within our overall strategy.}

To generate metric inequalities, we use a feasible capacity vector $\Bar{q}$ defined by the optimal solution $(\Bar{x},\Bar{y})$ to LP(BIN)
(or $\Bar{q}_a=q_a\sum_{t\in\mathcal{T}_a}t\Bar{y}_{at},\ \forall \, a \in \mathcal{A}$).
% and iteratively generate inequalities by ensuring each arc transporting commodities (i.e., the set of arcs $\{a\in\mathcal{A}|\sum_{t\in\mathcal{T}_a}y_{at}>0\}$) is in at least one inequality.
% (i.e., setting its dual value, or edge weight, to 1). 
We then use the standard linear programming dual representation of the multicommodity flow polytope \eqref{odMI}
% , later described in detail in \eqref{odMI}, 
to generate individual metric inequalities. Bounding any extreme ray of the dual creates an associated inequality. Note that this dual is formulated using the arc-based relaxation of the original multicommodity flow problem (i.e., use commodity-specific arc-flow variables as opposed to path-flow variables). 
Since the arc-based formulation is a relaxation, the identified metric inequalities remain valid for the path-based formulation. Importantly, 
the capacity vector $\Bar{q}$ used to generate the metric inequalities is feasible for the more restricted path-based formulation by definition. Moreover, the resulting metric inequalities are directly applicable to the path-based formulation. \rev{This is because} the arc-flow variables are projected out given $\Bar{q}$, \rev{leaving inequalities that depend} only on arc-capacity variables and the commodity demands transiting those arcs.

We further reduce the problem size by \rev{aggregating commodities either by origin or by destination. Under origin-based aggregation,} all commodities with the same origin node are combined into a single commodity; \rev{under destination-based aggregation,} commodities are combined according to their destination node. These aggregations are feasible because the demand flow in the LP is splittable.

We now outline the steps to generate a single helper metric inequality and integral metric inequality (i.e., user cut) when commodities are aggregated by origin.

\paragraph{Origin-based commodity approach}
Let all commodities originating from the same origin $o(k)=o$ be consolidated into one commodity $o\in\mathcal{O}$, where the set $\mathcal{O}$ represents the redefined commodity set. We define the path set for commodity $o\in\mathcal{O}$ as $\mathcal{P}_o=\{\cup_{\{k\in \mathcal{K}|o(k)=o\}}\mathcal{P}_k\}$.
% to include all paths of commodity $k\in\mathcal{K}_o$, or $\mathcal{P}_o=\{\cup\mathcal{P}_k \, \forall k\in \mathcal{K}_o\}$, where $\mathcal{K}_o$ is the set of commodities composing each new commodity $o\in \mathcal{O}$. 
Commodity demand $d_k$ for $k\in\mathcal{K}$ is relabeled as $\psi^o_d$ \rev{for each commodity} originating at $o(k)=o$ and destined for $d(k)=d$.
% and included in the total demand emanating from $o(k)=o$, or $\psi^o_o=\sum_{\{k\in \mathcal{K}|o(k)=o\}}-d_k$. 
\rev{These destination-specific demands are included in the total demand emanating from origin $o$, given by} $\psi^o_o=\sum_{\{k\in \mathcal{K}|o(k)=o\}}-d_k$.
Note $\psi^o_i=0$ when $i\neq o(k),d(k),
% $ for each commodity $k$ originating at $o(k)=o$ and destined for $d(k)$ (i.e., the set of commodities $\{k\in \mathcal{K}|o(k)=o\}$).
$ for all $\{k\in \mathcal{K}|o(k)=o\}$.
     
     Given the optimal solution $(\Bar{x},\Bar{y})$ to the (path-based) LP(BIN), set capacities %\newline 
     to the values  $\Bar{q}_a=q_a\sum_{t\in\mathcal{T}_a}t\Bar{y}_{at}$ for each $a \in \mathcal{A}$.
     Formulate the dual of the LP relaxation of the \textit{arc-based} MCND problem using dual variables $u$ and $v$, where $u$ corresponds to the arc flow conservation constraints\footnote{See 
     % Supplementary Material S4
     \ref{app:arcBased}.}
     % (see Supplementary Material S4)
     % \ref{app:arcBased})
     and $v$ corresponds to the variable upper bound capacity constraints (i.e., arc-based versions of \eqref{8thB1}).
     Also note that because one flow conservation constraint for each commodity $o\in\mathcal{O}$ is redundant, we may assume $u^o_o=0$ for each $o\in\mathcal{O}$. We now formulate the dual as follows:
     % with arc-flow variables projected out as:
     {\small \begin{maxi!}[2]
    	{}{\sum_{i\in \mathcal{N}}\sum_{o\in \mathcal{O}_i}\psi^o_iu_i^o
     - \sum_{a \in \mathcal{A}}\Bar{q}_av_a }{\label{odMI}}{\label{ObjSMCap}}
        \addConstraint{u^o_j-u^o_i}{\leq v_{ij},}{\  \forall \,  o \in \mathcal{O}_{ij}, \ \forall \ i,j \in \mathcal{A} \label{arcDC}}
        \addConstraint{u^o_o}{=0,}{  \ \forall \,  o\in \mathcal{O} \label{origDC}}
        \addConstraint{v_a}{\in \mathbb{R}_{\geq 0},}{\ \forall \, a \in \mathcal{A}} \label{vvar}
        \addConstraint{u^o_i}{\in \mathbb{R},}{\ \forall \, o \in \mathcal{O}_i, \ \forall \ i \in \mathcal{N}.\label{wvar}}
    \end{maxi!} }
    The set $\mathcal{O}_{ij}$ represents the commodities that have at least one path using arc $(i,j)\in\mathcal{A}$ (i.e., $\mathcal{O}_{ij}=\{o\in \mathcal{O} | (i,j)\in \mathcal{P}_o\}$). Similarly, $\mathcal{O}_i$ denotes the commodities with one or more paths that include node $i\in\mathcal{N}$ (i.e., $\mathcal{O}_{i}=\{o\in \mathcal{O} | i\in \mathcal{P}_o\}$).
    % Note that 
    As noted earlier, this is a relaxed version of the original path-based model because path set $\mathcal{P}_o=\{\cup_{\{k\in \mathcal{K}|o(k)=o\}}\mathcal{P}_k\}$ is used to define $\mathcal{O}_{ij}$. Consequently, a commodity $k$ with $o(k)=o$ is allowed to traverse any arc in $\mathcal{P}_o$, including arcs that do not appear in its individual path set $\mathcal{P}_k$.
    
    % To obtain non-zero solutions for \eqref{odMI}, 
    In an iterative fashion, we optimize \eqref{odMI} with $v_a=1$ for each arc transporting commodity demands (i.e., the set of arcs $\{a\in\mathcal{A}|\sum_{t\in\mathcal{T}_a}\Bar{y}_{at}>0\}$).
    % one arc $a\in\mathcal{A}$ with $\sum_{t\in\mathcal{T}_a}\Bar{y}_{at}>0$. 
    In each iteration, if the optimal objective value is 0, let $(\Bar{v},\Bar{u})$ be an optimal solution to \eqref{odMI} and collect the following helper metric inequality:
    \begin{align}\label{eq:MIbase}
        \sum_{a\in \mathcal{A}}\Bar{v}_aq_a\sum_{t\in\mathcal{T}_a}ty_{at}  &\geq  \sum_{i\in \mathcal{N}}\sum_{o \in \mathcal{O}_i}\psi^o_i\Bar{u}^o_i
    \end{align}
    and \textit{integral metric inequality} generated from constraints \eqref{eq:MIbase}:
    \begin{align}\label{eq:intMI}
        \sum_{a\in \mathcal{A}}\Bar{v}_a\left\lceil \frac{q_a}{q_{a'}}\right\rceil \sum_{t\in\mathcal{T}_a}ty_{at}  &\geq  \sum_{i\in \mathcal{N}}\sum_{o \in \mathcal{O}_i}\left\lceil \frac{\psi^o_i}{q_{a'}}\right\rceil\Bar{u}^o_i,
    \end{align}
    where $q_{a'}=\min_{\{a\in \mathcal{A}|\Bar{v}_{a}=1\}}\{q_{a}\}$. Identical metric inequalities may be generated for two different arcs; thus, only unique inequalities are collected.

    While constraints \eqref{eq:MIbase} are always valid and therefore \rev{leave} the LP(BIN) relaxation \rev{unchanged}, one can \rev{obtain stronger} aggregations by generating metric inequalities using a point $(\Bar{x},\Bar{y})$ from a strengthened version of LP(BIN) (e.g., LP(BIN) with constraints~\eqref{eq:01valid}).
    We will use this approach in \rev{our Integrated Cut-Generation strategy outlined in \ref{ICG}.}

\subsection{Lagrangian Selection}
% As discussed in the previous subsection, the metric inequalities obtained depend on the particular solution of LP(BIN) used to construct them.
\rev{Generated} metric inequalities depend on the particular solution of LP(BIN) used to construct them. In this subsection, we introduce a method for selecting additional LP solutions from which further metric inequalities can be generated. We adapt an approach from \cite{fischetti2011relax,becu2024approximating}, originally developed for choosing effective Gomory mixed-integer cuts. The key idea is as follows: rather than repeatedly adding cuts which can introduce numerical issues~\cite{cornuejols2013safety,dey2018theoretical}, we incorporate them into the objective function and re-solve the LP relaxation. This modification yields a new LP-feasible point, which can then serve as the basis for generating additional metric inequalities in an iterative fashion.

\begin{algorithm}[!h]
\small
\SetAlgoLined
\DontPrintSemicolon
\SetKwInOut{Input}{Input}
\SetKwInOut{Output}{Output}

\Input{Initial LP solution $(\Bar{x},\Bar{y})$, commodity demands $\psi$}
\Output{Helper constraint set $\mathcal{M}^H$, Lagrangian-modified LP}

% Initialization
Initialize sets $\mathcal{M}^H \leftarrow \emptyset, \mathcal{M}^I \leftarrow \emptyset$\;
Calculate effective capacities: $\Bar{q}_a \leftarrow q_a\sum_{t\in\mathcal{T}_a}t\Bar{y}_{at}$ for all $a\in\mathcal{A}$\;

% Loop
\For{each active arc $a \in \mathcal{A}$ (where $\Bar{q}_a > 0$)}{
    $(\bar{v}, \bar{u}) \leftarrow$ optimal solution to Separation Problem \eqref{odMI} given $\Bar{q}$ and $v_a=1$\;
    $z^* \leftarrow$ objective value of solution $(\bar{v}, \bar{u})$\;
    
    \If{$z^* = 0$}{
        Construct metric inequality $C_{metric}$ from $(\bar{v}, \bar{u})$ and $\psi$ using \eqref{eq:MIbase}\;
        \If{$C_{metric} \notin \mathcal{M}^H$}{
            $\mathcal{M}^H \leftarrow \mathcal{M}^H \cup \{ C_{metric} \}$\;
            $\mathcal{M}^I \leftarrow \mathcal{M}^I \cup \{ \text{Integer version of } C_{metric} \text{ via } \eqref{eq:intMI} \}$\;
        }
    }
}

% Post-processing
Update LP: Add constraints $\mathcal{M}^I$\;
$(x^*,y^*) \leftarrow$ Solve updated LP\;
$\mathcal{D}^I \leftarrow$ Dual values corresponding to $\mathcal{M}^I$ in $(x^*,y^*)$\;
Update LP objective: Move $\mathcal{M}^I$ to objective using penalties $\mathcal{D}^I$\;

\Return $\mathcal{M}^H$, Modified LP, $(x^*,y^*)$

\caption{\rev{Lagrangian relaxation for metric inequality generation}} \label{alg:Lagrange}
\end{algorithm}

When collecting helper metric inequalities \eqref{eq:MIbase} to add directly to BIN, we use a Lagrangian relaxation approach to guide the generation of additional inequalities. In particular, the integral metric inequalities \eqref{eq:intMI} are incorporated into the objective of LP(BIN) with their associated Lagrange multipliers, and the modified LP is re-solved to obtain a new feasible solution. This updated solution can then be used to generate further metric inequalities. Algorithm~\ref{alg:Lagrange} summarizes the procedure for iteratively constructing and selecting both helper and integral metric inequalities.

\rev{
\subsection{Integrated Cut-Generation Strategy}\label{ICG}

In this section, we present an algorithmic framework that uses the new valid inequalities and also metric inequalities generated via Lagrangian relaxation. We call this framework the Integrated Cut-Generation (ICG) approach.
The goal of ICG is to construct an enhanced collection of valid inequalities
% , generated both before optimization begins and at the root node, 
that improves the dual bounds obtained while keeping constraint-generation overhead manageable.}

\rev{
\paragraph{Strategy and implementation}
The implementation proceeds in two phases:}

\begin{enumerate}
    \item \rev{\textbf{Pre-solve strengthening.}} Prior to invoking the MIP solver, we generate helper metric inequalities as follows:
    \begin{enumerate}
        \item Apply Algorithm~\ref{alg:Lagrange} to LP(BIN) to generate and collect an initial set of helper metric inequalities; denote the resulting LP as LP(BIN)$_{\text{MI}}$.
        \item Using the optimal solution of LP(BIN)$_{\text{MI}}$, generate SAC-Pack constraints~\eqref{eq:01valid} via the separation IP~\eqref{sep} and Gen-SAC-Pack constraints~\eqref{eq:intValid} obtained through post-processing. Add these inequalities to form LP(BIN)$_{\text{MI+VI}}$.
        \item Apply Algorithm~\ref{alg:Lagrange} again to LP(BIN)$_{\text{MI+VI}}$ to obtain an additional set of helper metric inequalities.
        \item Add all helper metric inequalities collected in steps (a) and (c) to the BIN formulation.
    \end{enumerate}

    \item \rev{\textbf{Root-node cut generation.}} At the root node, we add the following user cuts via a callback:
    \begin{enumerate}
        \item SAC-Pack constraints~\eqref{eq:01valid};\label{sacpackpp}
        \item Post-processed Gen-SAC-Pack constraints~\eqref{eq:intValid} derived from (i);
        \item Gen-SAC-Pack constraints~\eqref{eq:intValid} with $B \le 3$ generated using the separation routine of Section~\ref{sec:intVI} (with dynamic programming details in 
        % Supplementary Material S2
        \ref{app:dp}).
    \end{enumerate}
\end{enumerate}

\section{Computational Study} \label{sec:comp}
We conduct computational experiments on two sets of instances to evaluate the effectiveness of our proposed methods on both path- and arc-based MCND models.
We generate fulfillment instances using the demand data and network topology of an e-commerce company to evaluate path-based problems
and use the publicly-available Canad instances \cite{CanadInst} to evaluate arc-based problems.
The objectives of our computational study are to (i) assess the value of reformulating INT using the multiple-choice binary capacity variables \eqref{eqn:replace}, (ii) measure the improvement to LP(BIN) (and the arc-based version) when applying the SAC-Pack constraints \eqref{eq:01valid} and the Gen-SAC-Pack constraints \eqref{eq:intValid}, and (iii) \rev{evaluate the effectiveness of the Integrated Cut-Generation (ICG) approach in improving the lower bounds obtained by commercial solvers.}

The optimization models and separation routines are coded in Python 3.10 and solved using Gurobi 11.0 with default settings for the IP solver, unless noted otherwise. 
% Experiments are run on 
We use a Linux computing
cluster, consisting of nodes using 24-core dual Intel Xeon Gold 6226 CPUs @ 2.7 GHz with 192GB of RAM each, to run all experiments. Reported times are in hours, unless otherwise noted.

\subsection{Instance Generation}
We use two sets of instances for this study: e-commerce fulfillment instances and the Canad instances \cite{CanadInst}. We use the e-commerce fulfillment instances to assess all previously described path-based methodologies and results, as well as \rev{the ICG approach.}
% a newly defined bound-improving strategy.
% whereas we only evaluate the strength of the SAC-Pack and Gen-SAC-Pack constraints related to improving the LP relaxations for arc-based models using the Canad instances.
The Canad instances are only used to evaluate
% we focus exclusively on evaluating 
the effectiveness of the SAC-Pack and Gen-SAC-Pack constraints in strengthening the LP relaxations for \textit{arc-based} models.
Thus, all experiments are conducted on the e-commerce fulfillment instances, unless otherwise noted.
% whereas only experiments related to improving the LP relaxations for arc-based models were conducted on the Canad instances.

\paragraph{E-commerce fulfillment instances} We generate synthetic instances constructed from historical order fulfillment and middle-mile network data provided by an e-commerce company. We create three instance groups, as defined by the number of vendors  
% (i.e., vendor locations the e-commerce company fulfills orders direct from) 
$\mathcal{N}_v\subset \mathcal{N}$, fulfillment centers (FCs) 
% (i.e., locations where the e-commerce company stores products) 
$\mathcal{N}_f\subset \mathcal{N}$, and destinations $\mathcal{N}_d\subset \mathcal{N}$. In each instance, the set of vendors $\mathcal{N}_v$ represents externally-owned locations from which the company fulfills orders. The set of FCs $\mathcal{N}_f$ represents the internal locations within the fulfillment network where the company stores products and fulfills orders. The set of destinations $\mathcal{N}_d$ represents last-mile distribution center locations that complete the delivery process to the final customer. Fulfillment centers act as both commodity origins and transfer centers; they can be used for intermediate shipment transfers along a path, whereas locations in $\mathcal{N}_v$ and $\mathcal{N}_d$ cannot. Thus, the total number of commodity origins for a given instance is $|\mathcal{N}_v|+|\mathcal{N}_f|$ and total number of arcs is at most $(|\mathcal{N}_v| + |\mathcal{N}_f|)|\mathcal{N}_d| + |\mathcal{N}_f|(|\mathcal{N}_v| + |\mathcal{N}_f|-1)$. 

For each group, we generate 5 randomized instances. Locations $\mathcal{N}_v,\ \mathcal{N}_f, \text{ and } \mathcal{N}_d$ are sampled from larger sets of vendor, FC, and destination locations, respectively, for each instance.  A set of potential commodities is initialized with all vendor-destination pairs. We then randomly remove a subset of commodities to better reflect the sparsity of real-world fulfillment networks, resulting in an average of 60\% of vendor-destination pairs in the commodity set. A set of commodities originating at FC locations is also created. For each destination $d\in\mathcal{N}_d$, we generate 5 commodities by assuming that the 5 nearest FCs send orders to $d$.
% We create 5 commodities for each destination in $\mathcal{N}_d$ by assuming that the 5 nearest FCs send orders to $d$. 
For context, this construction is designed to mimic the practice of using multiple FCs to stock the same items and then to sometimes require shipment of items from further FCs due to inventory stock availability. Thus, the number of FC-originating commodities is always $5|\mathcal{N}_d|$. Commodity demands are sampled from empirical demand distributions derived from company shipment volume data. 

We generate a set of arcs $\mathcal{A}$, consisting of both direct arcs $\mathcal{A}_d$ and consolidation arcs $\mathcal{A}_c$. Consolidation arcs connect all vendors to FCs, FCs to other FCs, and FCs to destinations, whereas direct arcs connect a vendor to a destination if a commodity for that pair exists. To generate a path set $\mathcal{P}_k$ for commodity $k\in \mathcal{K}$, we start by including the direct path (equivalent to an arc) connecting $o(k)$ to $d(k)$, with a length denoted as $l_a$. We then add the following four paths: (1) the shortest distance two-arc path using a single transfer FC, (2) the two-arc path transferring at the FC closest to the origin, (3) the two-arc path transferring at the FC closest to the destination, and (4) a three-arc path transferring at the FCs in (2) and (3), if they are not the same. We then include all network paths with a total length $\sum_{a'\in p}l_{a'}$ of $1.1l_a$ or less and containing a maximum of three arcs. Duplicate paths in each set $\mathcal{P}_k$ are removed.
%, as well as any arcs in $\mathcal{A}$ that are unused across all path sets $\cup_{k\in\mathcal{K}}\mathcal{P}_k$. 

Consolidation arcs transport commodity flow using a truckload freight mode with a single capacity $q_a=12000$ and cost dependent on the arc length $l_a$, whereas direct arcs can ship commodities using both truckload and less-than-truckload (LTL) freight modes. We use the same capacity and cost structure as \cite{greening2023lead} for direct arcs, as shown in Table \ref{tab:capCost}, where we use $l_a$ to denote the length of arc $a\in\mathcal{A}_d$ as measured in miles and $v_a$ to denote the volume flowing on arc $a \in\mathcal{A}_d$. We pre-process all direct arc costs and incorporate them into the variable path cost $c_p$. This pre-processing step enables us to reduce the problem size by eliminating direct arcs from the set of arcs in the MCND models.
\begin{table}[h]
\centering
\footnotesize
\caption{Freight mode costs per shipment on direct arc $a\in\mathcal{A}_d$.}
\begin{tabular}{lcc}
\hline
Freight Mode & Volume (lbs) & Cost \\ \hline
Truckload & $\phantom{2,00}0<v_a \leq 12,000$ & $750+1.27l_a$ \\
LTL$_1$ & $\phantom{2,00}0< v_a \leq \phantom{1}2,000$ & $0.05(750+1.27l_a)+v_a(0.234+0.0004l_a)$ \\
LTL$_2$ & $2,000<v_a\leq \phantom{1}2,700$ & $0.05(750+1.27l_a)+2000(0.234+0.0004l_a)$ \\
LTL$_3$ & $2,700 < v_a \leq \phantom{1}4,000$ & $0.8v_a(0.234+0.0004l_a)$ \\ \hline
\end{tabular}
\label{tab:capCost}
\end{table}

In Table \ref{tab:instChar}, we provide the following instance characteristics: group number, number of vendors, FCs, and destinations, instance number, number of commodities, total volume across all commodities, and number of arcs and paths. 
% For a more detailed description of our instance generation procedure, see \ref{app:instGen}.

\begin{table}[h]
\footnotesize
\centering
\caption{E-commerce, path-based instance characteristics.}
\begin{tabular}{cccccrrrrlr}
\hline
\multirow{2}{*}{Group} & \multirow{2}{*}{Vend} & \multirow{2}{*}{FC} & \multirow{2}{*}{Dest} & \multirow{2}{*}{Inst} & \multicolumn{1}{c}{\multirow{2}{*}{Comm}} & \multicolumn{1}{c}{\multirow{2}{*}{\begin{tabular}[c]{@{}c@{}}Total Vol \\ (lbs)\end{tabular}}} & \multicolumn{2}{c}{Arcs} &  & \multicolumn{1}{c}{Paths} \\ \cline{8-9} 
 &  &  &  &  & \multicolumn{1}{c}{} & \multicolumn{1}{c}{} & \multicolumn{1}{c}{$|\mathcal{A}|$} & \multicolumn{1}{c}{$|\mathcal{A}_c|$} &  & \multicolumn{1}{c}{$|\mathcal{P}|$}\\ \hline
\multirow{5}{*}{1} & \multirow{5}{*}{20} & \multirow{5}{*}{5} & \multirow{5}{*}{15} & 1 & 252 & 641,443 & 365 & 188 &  & 972  \\
 &  &  &  & 2 & 264 & 782,451 & 382 & 193 &  & 996  \\
 &  &  &  & 3 & 263 & 741,524 & 380 & 192 &  & 989  \\
 &  &  &  & 4 & 262 & 747,582 & 380 & 193 &  & 1,093  \\
 &  &  &  & 5 & 254 & 606,122 & 368 & 189 &  & 1,012\vspace{2mm} \\
\multirow{5}{*}{2} & \multirow{5}{*}{90} & \multirow{5}{*}{9} & \multirow{5}{*}{55} & 1 & 3,040 & 2,177,671 & 4,113 & 1,348 &  & 18,639  \\
 &  &  &  & 2 & 3,577 & 2,461,797 & 4,677 & 1,375 &  & 20,006  \\
 &  &  &  & 3 & 3,071 & 2,167,907 & 4,166 & 1,370 &  & 20,082  \\
 &  &  &  & 4 & 2,841 & 1,995,813 & 3,930 & 1,364 &  & 17,570  \\
 &  &  &  & 5 & 2,958 & 2,075,331 & 4,048 & 1,365 &  & 16,931\vspace{2mm} \\
\multirow{5}{*}{3} & \multirow{5}{*}{105} & \multirow{5}{*}{10} & \multirow{5}{*}{65} & 1 & 4,233 & 2,313,167 & 5,698 & 1,790 &  & 29,727  \\
 &  &  &  & 2 & 3,812 & 2,133,729 & 5,264 & 1,777 &  & 28,295  \\
 &  &  &  & 3 & 4,402 & 2,432,839 & 5,856 & 1,779 &  & 28,966 \\
 &  &  &  & 4 & 4,375 & 2,427,937 & 5,837 & 1,787 &  & 30,009  \\
 &  &  &  & 5 & 4,325 & 2,475,692 & 5,787 & 1,787 &  & 30,441 \\ \hline
\end{tabular}
\label{tab:instChar}
\end{table}

\paragraph{Canad instances} To assess the strength of our new valid inequalities when using an arc-based formulation, we use a set of multicommodity network flow Canad instances \cite{CanadInst} used in several papers \cite{HewittMike2010CEaH,crainic2000simplex,crainic2001bundle,ghamlouche2003cycle,chouman2015cutting,chouman2017commodity,AKHAVANKAZEMZADEH2022255}. In the previously mentioned papers, the instances are used to benchmark solution approaches for the MCND problem with splittable demand; however, they remain feasible with unsplittable demands.
% (i.e., binary flow variables). 
Thus, following \cite{hewitt2013branch}, we use the set identified as ``C" to study the unsplittable MCND problem. This set consists of 31 instances with the naming scheme \#nodes-\#arcs-\#commodities-cost attribute-capacity attribute. There are two cost attributes denoted as F for a high fixed-to-variable cost ratio and V otherwise. There are two capacity attributes denoted as T for tightly-capacitated and L for loosely-capacitated arcs relative to total demand. The instances range in size from 20-30 nodes, 230-700 arcs, and 40-400 commodities.

Unlike the e-commerce fulfillment instances, all nodes can act as transfer nodes, and a single node can be an origin for one commodity and a destination for another; thus, there is no notion of node sets like $\mathcal{N}_v$, $\mathcal{N}_f$, or $\mathcal{N}_d$. Furthermore, the Canad instances were developed for the case where arc-capacity binary variables indicate whether an arc, with its predefined capacity $q_a$, is turned on, as opposed to installing integer multiples of $q_a$. Thus, for our arc-based experiments, we use the same formulation as the authors of \cite{hewitt2013branch} to allow for a direct comparison (see 
% Supplementary Material S5
\ref{app:arcBasedExp} for the arc-based, fixed-capacity formulation).
% \ref{app:arcBasedExp}). 

Lastly, while the e-commerce fulfillment Group 1 instances are comparable in size to the Canad instances, Groups 2 and 3 are significantly larger than those commonly used to evaluate MCND methodologies. We chose  these larger instances to better evaluate the potential of embedding our developed methodology into more sophisticated solution approaches for addressing real-world e-commerce fulfillment network problems.

\subsection{Comparison of Formulations}\label{sec:compForm}
% We begin our computational experiments by demonstrating the improvement to the lower bound of the MCND problem as a result of reformulating the integer capacity variables as binary multiple-choice capacity variables. 
We begin our computational experiments by demonstrating how reformulating INT \eqref{rblpSM} with binary multiple-choice capacity variables \eqref{eqn:replace}
% the reformulation of integer capacity variables to binary multiple-choice capacity variables 
can improve the lower bound solution found by a commercial solver. To accomplish this, we optimize both models, INT \eqref{rblpSM} and BIN \eqref{rblpB1}, for 3 hours using the best bound focus setting. In Table \ref{tab:binVsInt}, we report the number of capacity variables, capacity constraints, IP lower bounds, IP gaps to the best objective found, and the improvements to the IP gaps. All values are the average across the 5 instances of each group. The best objective used to calculate the IP gap for each instance is obtained by optimizing both models for 12 hours with default settings and selecting the minimum objective value of the two. We measure the improvement (Impr) by calculating the percent decrease of the IP gaps.

\begin{table}[htbp]
\footnotesize
\centering
\caption{Comparing INT \eqref{rblpSM} and BIN \eqref{rblpB1} MCND formulations.}
\begin{tabular}{ccccccc}
\hline
Group & Model Type & Cap Vars  & Cap Constrs & IP LB & IP Gap & Impr \\ \hline
\multirow{2}{*}{1} & INT & \phantom{1,}191.0 & \phantom{1,}191.0 & 189,577 & 0.9\% & - \\
 & BIN & \phantom{1,}394.8 & \phantom{1,}382.0 & 190,268 & 0.6\% & 38.5\%\vspace{2mm} \\
\multirow{2}{*}{2} & INT & 1,364.4 & 1,364.4 & 658,504 & 6.2\% & - \\
 & BIN & 2,182.8 & 2,728.8 & 663,859 & 5.4\% & 12.4\%\vspace{2mm} \\
\multirow{2}{*}{3} & INT & 1,784.0 & 1,784.0 & 747,035 & 8.3\% & - \\
 & BIN & 2,788.8 & 3,568.0 & 748,815 & 8.1\% & 2.6\%\\ \hline
\end{tabular}
\label{tab:binVsInt}
\end{table}

We first observe that even with an increase in model size ($\sum_{a\in\mathcal{A}_c}(T^{\max}_a-1)$ more variables and $|\mathcal{A}|$ more constraints), the commercial solver finds stronger lower bounds, on average, for BIN compared to INT. When analyzing the detailed results, we observe that BIN consistently outperforms in terms of best bound across \textit{every} instance in each group. 

We next observe that Group 1 shows the largest \rev{percent} improvement\rev{, whereas the absolute IP-gap reduction is largest for Group 2.} \rev{The stronger relative improvement in Group 1 reflects the fact that it contains a} higher proportion of arcs with $T^{\max}_a>1$, and thus a larger \rev{relative} increase in number of capacity variables. \rev{We believe} these results indicate that the BIN model is particularly effective in networks where many arcs may require more than one truckload.

Lastly, among the 5 instances in Group 1, a state-of-the-art commercial solver is only able to achieve optimality within 3 hours for the first instance, confirming the difficulty of these problems.

\subsection{Improvements to LP Relaxation} \label{subsec:LPImpr}
In this section, we evaluate the strength of the valid inequalities introduced in Section~\ref{sec:relax}, comparing them to existing methods. 
% , namely, the 0-1 valid inequalities, 0-1 valid inequalities plus post-processing, and the previous plus integral valid inequalities. 

\paragraph{E-commerce Fulfillment Instances}
We begin by assessing the performance of the SAC-Pack and Gen-SAC-Pack constraints in strengthening the LP relaxation (LPR) solutions for e-commerce fulfillment instances using Group 1. 
% We compare the strengthened LP relaxation (LPR) solutions of LP(BIN)
Specifically, we compare the objective values
% LP relaxation solutions 
of LP(BIN) strengthened by disaggregated linking constraints \eqref{eqn:binDA}, 
% integral metric inequalities \eqref{eq:intMI},
% (i.e., the integral version of our ``helper" metric inequalities \eqref{eq:MIbase}), 
\rev{$r$}-split $c$-strong inequalities with \rev{$r$} set to a maximum value of $10$ (applied using the same method described in the computational study of \citep{atamturk2002splittable}), and our new valid inequalities \eqref{eq:01valid} and \eqref{eq:intValid}. In each case, we continue generating additional cuts until no further improvement of the LP relaxation is observed. In \rev{Figure~\ref{fig:viPerf}}, we present the results for the configurations defined in Table~\ref{tab:viDefns}. The figure reports, for Group~1, the \rev{average} improvement in IP gap\rev{---with ranges indicating the smallest and largest improvements observed across the individual instances---}over LP(BIN) without valid inequalities (a)  and the \rev{average} number of cuts added. 
We \rev{order} the configurations \rev{by their improvement in} IP gap, 
where the IP gap is defined as:
\begin{equation}
    \frac{(\text{best IP objective}-\text{LPR})}{\text{best IP objective}}\label{eq:IPGap}.
\end{equation}
The improvement of a strengthened LP relaxation (LPR) over the base LP relaxation (LP(BIN))---equivalently, the improvement in IP gap---\rev{is computed as}:
\begin{equation}
    \frac{(\text{LPR}-\text{LP}(\text{BIN}))}{(\text{best IP objective}-\text{LP}(\text{BIN}))}\label{eq:lpImpr}.
\end{equation}

\begin{table}[htpb]
\footnotesize
\centering
\caption{Definitions of the valid inequality configurations reported. Configurations (b) through (\rev{g}) add the listed inequalities to (a).}
\begin{tabular}{cl}
\hline
Conf & \multicolumn{1}{l}{Definition} \\ \hline
(a) & LP(BIN) \\
% (b) & Integral metric inequalities \eqref{eq:intMI} \\
% (b) & Disaggregated linking constraints \eqref{eqn:intDA} applied to \eqref{rblpSM} (i.e., INT) \\
(b) & Disaggregated linking constraints \eqref{eqn:binDA} \\
(c) & \rev{$r$}-split $c$-strong inequalities for $\rev{r}\leq 10$ \\
% (e) & (b), (c), and (d) \\
(d) & SAC-Pack constraints \eqref{eq:01valid} \\
(e) & (d) plus post-processing \\
(f) & (e) plus Gen-SAC-Pack constraints \eqref{eq:intValid} with $B\leq 3$ \\
(g) & (e) plus Gen-SAC-Pack constraints \eqref{eq:intValid} with $B\leq 10$ \\ \hline
% (j) & (b) plus (h) \\
% (k) & (b) plus (i) \\ \hline
\end{tabular}
\label{tab:viDefns}
\end{table}

In Figure~\ref{fig:viPerf}, we first observe that adding the disaggregated linking constraints (b) significantly strengthens the LP relaxation relative to LP(BIN)\rev{, achieving an improvement of approximately 65\%}.
We next observe that the \rev{$r$}-split $c$-strong inequalities (c) produce stronger relaxations than (b), as expected. \rev{However, on their own they} are significantly outperformed by the SAC-Pack constraints (d), which also add fewer cuts on average. 
As we continue, we see that each additional \rev{cut family, or the use of a larger upper bound $B$, further strengthens the LP relaxation. Ultimately, we obtain an average improvement of 79.8\%, reducing the IP gap from 8.7\% to 5.0\%.
% IP gap of 4.9\%, reducing the IP gap of LP(BIN) by 85.1\%. 

We also note that configuration (g) requires the longest time to generate all cuts (187 seconds on average), while configuration (f) is the next longest at 93 seconds on average.}
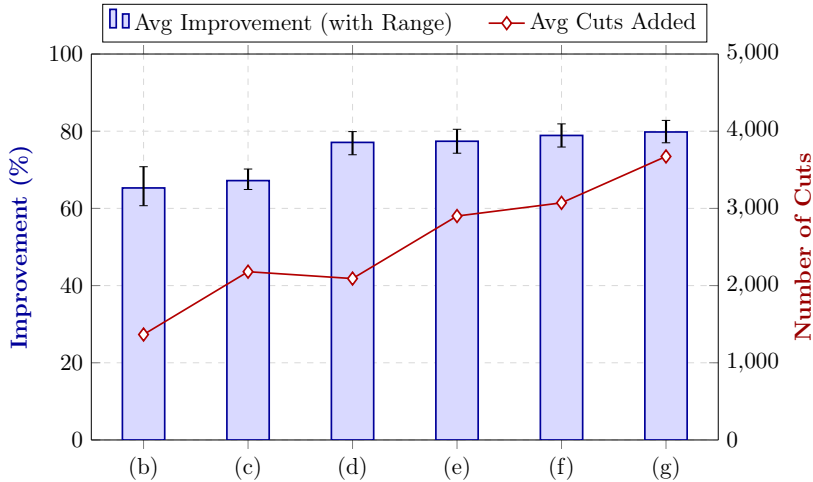
\begin{figure}[h!]
\centering
\begin{tikzpicture}[scale=0.8]
    \begin{axis}[
        % --- Axis 1: Improvement (Bars) ---
        width=12cm, height=8cm,
        axis y line*=left,    % Left Y-axis only
        ybar,                 % Bar chart
        bar width=20pt,
        ylabel={Improvement (\%)},
        symbolic x coords={(b),(c),(d),(e),(f),(g)},
        xtick=data,
        ymin=0, ymax=100,
        grid=major,
        grid style={dashed, gray!30},
        ylabel style={font=\bfseries, color=blue!60!black},
        legend style={draw=none} % Hide default legend here
    ]
    
    % --- PLOT 1: Average Improvement with Range ---
    \addplot+[
        color=blue!60!black,
        fill=blue!15,
        thick,
        error bars/.cd,
        y dir=both, 
        y explicit,    
        error bar style={line width=1pt, black, sharp plot}
    ] coordinates {
        ((b), 65.3) -= (0,4.6)  += (0,5.5)  
        ((c), 67.2) -= (0,2.3)  += (0,3.0)  
        ((d), 77.1) -= (0,3.2)  += (0,2.8)  
        ((e), 77.4) -= (0,3.1)  += (0,3.1)  
        ((f), 78.9) -= (0,3.0)  += (0,3.0)  
        ((g), 79.8) -= (0,2.8)  += (0,3.0)  
    };
    
    \end{axis}
    
    % --- Axis 2: Cuts Added (Line) ---
    \begin{axis}[
        width=12cm, height=8cm,
        axis y line*=right,   % Right Y-axis only
        axis x line=none,     % Hide X-axis (shared)
        ylabel={Number of Cuts},
        ylabel style={font=\bfseries, color=red!60!black},
        ymin=0, ymax=5000,
        symbolic x coords={(b),(c),(d),(e),(f),(g)},
        xtick=data,
        % Unified Legend Setup
        legend style={
            at={(0.5,1.03)}, 
            anchor=south, 
            legend columns=-1,
            draw=black,       
            fill=white,       
            /tikz/every even column/.append style={column sep=0.5cm}
        }
    ]
    
    % --- Dummy Plot for Bar Legend (The Fix) ---
    % 'ybar legend' forces the icon to be a rectangle
    \addlegendimage{ybar, ybar legend, fill=blue!15, draw=blue!60!black, thick}
    \addlegendentry{Avg Improvement (with Range)}
    
    % --- PLOT 2: Cuts Line ---
    \addplot+[
        sharp plot,
        color=red!70!black,
        mark=diamond*,
        mark size=3pt,
        thick,
        mark options={fill=white}
    ] coordinates {
        ((b), 1366)
        ((c), 2180)
        ((d), 2091)
        ((e), 2901)
        ((f), 3072)
        ((g), 3673)
    };
    \addlegendentry{Avg Cuts Added}
    
    \end{axis}
\end{tikzpicture}
\caption{Valid inequalities (defined in Table \ref{tab:viDefns}) applied to the Group 1 instances.}\label{fig:viPerf}
\end{figure}

\paragraph{Canad Instances}
As previously noted, the SAC-Pack and Gen-SAC-Pack valid inequalities can also be generated for arc-based formulations.
To assess their effectiveness \rev{in this setting},
we compare the strengthened LP relaxation solutions for the arc-based Canad instances \rev{using a subset of the cut configurations listed in} Table \ref{tab:viDefns}.
In Table \ref{tab:lpArcSum}, we provide a summary of results for the Canad instances. Specifically, for each configuration we test, we provide the average values 
% across all 31 instances 
for the
% the best objective, 
LP relaxation solution (LPR), number of cuts added, cut generation time in seconds, IP gap, and the improvement of the IP gap. We also include the number of instances where the optimal objective is achieved after adding the cuts (\# Opt).
% To calculate the IP gap, 

We use Gurobi with default settings and a time limit of 2 hours
% to solve each IP model 
% given a solve time limit of 2 hours 
to obtain the best IP objective for each instance.
Of the 31 instances, Gurobi finds the optimal objective for 18 instances.
Although some instances remain unsolved within 2 hours, the average IP gap is only 0.64\% \rev{(1.52\% among the unsolved instances)} with an average objective of $194,443$. 
% \lmg{look at how many did solve to optimality}
For the LP relaxation solutions and IP gaps for individual instances, see 
% Supplementary Material S6.
\ref{app:arcBasedRes}.

The results in Table \ref{tab:lpArcSum} confirm that our new valid inequalities also significantly strengthen arc-based formulations. Our first observation is that adding only the disaggregated capacity linking constraints (b) improves the base LP by 79\%, but this improvement comes at the cost of adding, on average, more than 90,000 additional constraints. We next observe that the \rev{$r$}-split $c$-strong inequalities (c) achieve an 83\% improvement with only a fraction of the cuts compared to (b). However, the SAC-Pack constraints (d) yield an even greater improvement of over 85\%, reducing the average IP gap of (c) by \rev{0.5 percentage points, or by 15\% in relative terms}.
Notably, although the total number of SAC-Pack constraints generated is roughly twice that of the \rev{$r$}-split $c$-strong inequalities, the SAC-Pack constraints require less time to generate on average.

\begin{table}[htpb]
\footnotesize
\centering
\caption{\rev{Summarized results for the Canad instances.}}
\begin{tabular}{ccccccc}
\hline
Conf & \multicolumn{1}{c}{LPR} & \multicolumn{1}{c}{Cuts} & \multicolumn{1}{c}{\begin{tabular}[c]{@{}c@{}}Cut\\ Time (s)\end{tabular}} & \multicolumn{1}{c}{\begin{tabular}[c]{@{}c@{}}IP \\ Gap\end{tabular}} & \multicolumn{1}{c}{Impr} & \multicolumn{1}{c}{\# Opt} \\ \hline
(a) & 163,198 & \phantom{10,00}0 & \phantom{10}0 & 18.9\% & \phantom{0}0.0\% & 0 \\
(b) & 185,881 & 93,245 & \phantom{0}18 & \phantom{0}3.9\% & 79.3\% & 0 \\
(c) & 188,246 & \phantom{1}1,697 & 168 & \phantom{0}3.3\% & 82.7\% & 2 \\
(d) & 191,160 & \phantom{1}3,306 & 115 & \phantom{0}2.8\% & 85.3\% & 2 \\
(f) & 191,229 & \phantom{1}7,371 & 746 & \phantom{0}2.7\% & 85.7\% & 2 \\
(d),(c) & 191,591 & \phantom{1}3,959 & 245 & \phantom{0}2.4\% & 87.2\% & 3 \\ \hline
\end{tabular}
\label{tab:lpArcSum}
\end{table}

We next observe that adding Gen-SAC-Pack constraints to SAC-Pack constraints (denoted as (f)) 
% (via post-processing SAC-Pack constraints and employing the separation routine, shown as (h)) 
does not appear necessary for these instances and problem definition (i.e., without integer multiples of capacities on arcs). This conclusion is based on the marginal improvement obtained relative to the additional cuts and, more critically, the increased generation time.
% This was confirmed by 
We next test the configuration in which all SAC-Pack constraints are generated first, followed by the \rev{$r$}-split $c$-strong inequalities (denoted as (d),(c)). This configuration
% $k$-split $c$-strong inequalities are instead generated after adding all SAC-Pack constraints, 
yields a notable improvement in IP gap \rev{(87.2\% versus 85.7\% for (f))} while significantly reducing cut-generation time.
\rev{Overall, the SAC-Pack constraints (d) offer a good balance between cut strength, cut volume, and generation time. Adding the $r$-split $c$-strong inequalities (c) provides additional improvement while requiring only a modest increase in cuts and cut-generation time.}

We additionally compare configurations (d) and (d),(c) to the final bound reported in \cite{hewitt2013branch} (their Table 8). That bound is obtained by dynamically adding disaggregated capacity linking constraints and cover inequalities to their extended LP formulation at the root node (via user cuts) and then executing their branch-and-price procedure for 30 minutes. When comparing the bounds achieved by configurations (d) and (d),(c), we find they outperform the final bounds in \cite{hewitt2013branch} for 20 and 29 of the 31 instances, respectively (see Table \ref{tab:lpArc1} 
% Table 2 in S6 of the Supplementary Material 
for detailed instance results).
While this is a somewhat unfair comparison (given advances in modern commercial solvers, which aid the separation routines of SAC-Pack inequalities relative to those available when \cite{hewitt2013branch} was written), we report this result simply to \rev{show}
% demonstrate
the potential improvements our new valid inequalities could offer within such branch-and-price procedures.

\subsection{Evaluation of \rev{Integrated Cut-Generation Strategy}} \label{subsec:MIPImpr}

\rev{We next examine the performance of the Integrated Cut-Generation (ICG) approach. Using a 3-hour time limit for both methods, we compare ICG-strengthened BIN (dentoted 3h-ICG) with the baseline BIN (3h-BIN) formulation to assess improvements in dual bounds and overall solution quality.}

In Table \ref{tab:smMIP}, we compare the results of 3h-BIN and 3h-ICG for Group 1. Impressively, we observe that with 3h-ICG, we can now solve 4 out of 5 instances, with an average IP gap of 0.1\%, whereas with 3h-BIN, we can only solve the first instance, resulting in an average IP gap of 0.6\%. Furthermore, the total runtime (sum of constraint generation and solve time) is an hour or less for each of the four instances solved using 3h-ICG, with a substantial improvement (a 94\% decrease) for instance 1 compared to 3h-BIN. Although we are still unable to solve instance 4 within 3 hours, we decrease the IP gap by 74\% to 0.4\%.  On average, the 3h-ICG models include 175 helper metric inequalities, and the solver uses approximately 30 user cuts during its bounding procedure.

\begin{table}[htbp]
\footnotesize
\centering
\caption{Group 1 results for 3h-BIN and 3h-ICG.}
\begin{tabular}{cccccccc}
\hline
\multirow{2}{*}{Inst} & \multicolumn{2}{c}{3h-BIN} &  & \multicolumn{2}{c}{3h-ICG} &   \multirow{2}{*}{\begin{tabular}[c]{@{}c@{}}IP Gap \\ Impr\end{tabular}} & \multirow{2}{*}{\begin{tabular}[c]{@{}c@{}}Time \\ Impr\end{tabular}} \\ \cline{2-3} \cline{5-6}
 & IP Gap & Runtime &  & IP Gap & Runtime &    &  \\ \hline
1 & 0.0\% & 2.1 &  & 0.0\% & 0.1 &   \phantom{10}0\% & 94\% \\
2 & 0.4\% & 3.0 &  & 0.0\% & 0.5 &   100\% & 82\% \\
3 & 0.5\% & 3.0 &  & 0.0\% & 0.6 &   100\% & 79\% \\
4 & 1.6\% & 3.0 &  & 0.4\% & 3.0 &   \phantom{1}74\% & \phantom{1}0\% \\
5 & 0.4\% & 3.0 &  & 0.0\% & 1.0 &   100\% & 67\% \\ \hline
\end{tabular}
\label{tab:smMIP}
\end{table}

For Groups 2 and 3, we report the results for 3h-BIN, 12h-BIN, and 3h-ICG in Table \ref{tab:g56mip}. For 3h-ICG, the difference between 3 hours and the solve time is the time it takes to generate all inequalities prior to optimization. The most significant observation is that 3h-ICG (with limited solve time) produces stronger bounds than those found when solving BIN for 12 hours. On average, \rev{this corresponds to} IP gap improvements of 26.5\% and 22.5\% for Groups 2 and 3, respectively. 
Thus, we find that even for larger instances, using the ICG strategy yields significantly stronger lower bounds than those obtained by solving BIN for extended time limits.

\begin{table}[htbp]
\footnotesize
\centering
\caption{Groups 2 and 3 results for 12h-Base and 3h-ICG.}
\begin{tabular}{cccccccc}
\hline
\multirow{2}{*}{Group} & \multirow{2}{*}{Inst} & \multirow{2}{*}{\begin{tabular}[c]{@{}c@{}}3h-BIN \\ IP Gap\end{tabular}} & \multirow{2}{*}{\begin{tabular}[c]{@{}c@{}}12h-BIN\\  IP Gap\end{tabular}} & \multicolumn{2}{c}{3hr-ICG} & \multirow{2}{*}{\begin{tabular}[c]{@{}c@{}}3h-BIN\\ Impr\end{tabular}} & \multirow{2}{*}{\begin{tabular}[c]{@{}c@{}}12h-BIN\\  Impr\end{tabular}} \\ \cline{5-6}
 &  &  &  & Solve Time & IP Gap &  &  \\ \hline
\multirow{5}{*}{2} & 1 & 5.2\% & 5.1\% & 2.5 & 3.9\% & 25.3\% & 24.3\% \\
 & 2 & 5.2\% & 5.1\% & 2.3 & 4.2\% & 19.5\% & 18.2\% \\
 & 3 & 5.6\% & 5.5\% & 2.2 & 3.6\% & 35.3\% & 33.8\% \\
 & 4 & 5.9\% & 5.7\% & 2.4 & 3.9\% & 33.4\% & 32.1\% \\
 & 5 & 5.3\% & 5.1\% & 2.5 & 3.9\% & 25.9\% & 24.2\%\vspace{2mm} \\
\multirow{5}{*}{3} & 1 & 8.1\% & 7.1\% & 1.5 & 5.5\% & 32.9\% & 22.7\% \\
 & 2 & 7.2\% & 6.8\% & 1.5 & 4.9\% & 32.3\% & 28.8\% \\
 & 3 & 7.7\% & 6.7\% & 1.5 & 5.1\% & 33.0\% & 23.2\% \\
 & 4 & 9.7\% & 8.7\% & 1.6 & 7.1\% & 26.6\% & 18.4\% \\
 & 5 & 7.5\% & 6.8\% & 1.0 & 5.5\% & 27.1\% & 19.2\% \\ \hline
\end{tabular}
\label{tab:g56mip}
\end{table}\vspace{-3mm}

\section{Conclusions}
In this paper, we presented a reformulation of the multicommodity capacitated network design (MCND) problem which redefines the integer capacity variables as a multiple-choice selection of binary variables. We studied a structured relaxation of this formulation and showed that the convex hull solutions %were of a certain integral form. 
can be described using inequalities of a certain form.
Using this result, we defined two new classes of valid inequalities for the MCND problem with unsplittable flow: the single-arc commodity packing (SAC-Pack) and generalized single-arc commodity packing (Gen-SAC-Pack) constraints. We also developed separation routines for each class.
We next described metric inequalities and a Lagrangian relaxation approach to generate them,  \rev{and then developed the Integrated Cut-Generation (ICG) approach, which incorporates both the new valid inequalities and the metric inequalities.}
Finally, we presented computational results conducted on the path-based e-commerce logistics instances,
% derived from the historical demand data and network topology of a large e-commerce company, 
as well as on the arc-based Canad instances. We found that solvers obtain better lower bounds for the reformulation using multiple-choice binary capacity variables (BIN) than for
% compared to when solving 
the integer capacity formulation (INT). Our computational study also showed the strength of the newly defined SAC-Pack and Gen-SAC-Pack inequalities in comparison to existing methods for both path- and arc-based models. 
Our findings ultimately indicate that using a strategy \rev{such as the ICG approach} can substantially strengthen the bounds obtained by commercial solvers.
Although we have defined and tested one strategy which yields stronger bounds than the base model, its components can also be included into alternative bounding procedures, depending on the application.

\appendix
\section{Disaggregated Capacity Constraints Example}\label{app:intBinDA}
% \begin{example}\label{ex:intBinDA} 
Consider the following example from where the solution of LP(BIN) 
% LP relaxation of the multiple-choice model \eqref{rblpB1}
with constraints \eqref{eqn:binDA} produces a stronger solution compared to LP(INT)
% the LP relaxation of the integer capacity model \eqref{rblpSM}
with constraints \eqref{eqn:binDA}.
% because $t_{ak}^{\min}>1$ for a commodity $k\in\mathcal{K}$ using arc $a\in \mathcal{A}$.

Assume we have 2 commodities $k=1$ and $k=2$ that can both traverse arc $a$ using paths $p=1$ and $p=2$, respectively.
Let demand $d_1 = 5$, demand $d_2=105$, and $q_a=100$. Thus, $t^{\min}_{a1} = 1$, whereas $t^{\min}_{a2} = 2$. Let the LP relaxation solution be $x_1=0.75$ and $x_2 = 0.5$. The aggregrated linking constraints \eqref{8thSM} (and, similarly, \eqref{8thB1} using \eqref{eqn:replace}) state:
$$5x_1+105x_2\leq 100\tau_a.$$
Thus, the installed capacity on arc $a$ is $0.5625$ units for LP(INT) and LP(BIN).
% the LP relaxations of models \eqref{rblpSM} and \eqref{rblpB1}.

To strengthen LP(INT),
% the LP relaxation of model \eqref{rblpSM}, 
we can add the following constraints \eqref{eqn:intDA}:
\begin{align*}
    x_1&\leq \tau_a,\\
    2x_2&\leq \tau_a.
\end{align*}
Because $x_2=0.5$, the installed capacity increases to $1$ unit (or $\tau_a = 1$).

To strengthen LP(BIN), 
% the LP relaxation of model \eqref{rblpB1}, 
we can add the following constraints \eqref{eqn:binDA}:
\begin{align*}
    x_1&\leq y_{a1}+y_{a2},\\
    x_2&\leq y_{a2}.
\end{align*}
This would set $y_1=0.25$ and $y_2=0.5$ (because $y_1$ has a lower penalty cost), which is a total installed capacity of $1.25$ units.

Thus, linking constraints \eqref{eqn:binDA} added to LP(BIN) can produce a stronger relaxation solution compared to LP(INT) with linking constraints \eqref{eqn:intDA} if there exists a subset of commodities with demands that exceed an arc's single-unit capacity.

% \end{example}

\section{Separation Using Dynamic Programming.}\label{app:dp}

We would like to improve the runtime of solving (\ref{eq:SEPchild}).
\begin{observation}[Decomposition] Let 
\begin{eqnarray*}
S^t := \left\{ x \,|\, \sum_{p\in\mathcal{P}_a}w_p x_p \leq q_at \right\} \ \forall \, t \in \mathcal{T}_a,
\end{eqnarray*}
then we can solve:
\begin{eqnarray}\label{eq:DPbasic}
\begin{array}{rcl}
\textup{FEASVAL}^t:= &\textup{max}_{x,y}& \theta^{* \top} x - \alpha^{* }_t\\
&\textup{s.t.}& (x,y) \in S^t.
\end{array}
\end{eqnarray}
Note that:
\begin{itemize}
\item $\textup{FEASVAL}= \textup{max}\{\textup{max}_t \{ \textup{FEASVAL}^t\}, 0\}$ 
\item If $\textup{FEASVAL} \neq 0$, then $\hat{t} \in \textup{argmax}_t\{ \textup{FEASVAL}^t\}$ and $x^{\hat{t}} \in \textup{argmax}_x S^{\hat{t}}$, then $(x^{\hat{t}}, e_{\hat{t}})$ is an optimal solution of (\ref{eq:SEPchild}). If $\textup{FEASVAL} = 0$, then $(0,0)$ is an optimal solution of (\ref{eq:SEPchild}).
\end{itemize}
\end{observation}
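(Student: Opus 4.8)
The plan is to reduce the statement to the disjunctive description of $S$ that was already established inside the proof of Proposition~\ref{prop:Nonneg}, namely
\[
S \;=\; (\mathbf{0},\mathbf{0}) \;\cup\; \bigcup_{t\in\mathcal{T}_a}(S_t, e_t),
\]
where $S_t=\{x\in\{0,1\}^{|\mathcal{P}_a|}\mid \sum_{p\in\mathcal{P}_a}d_px_p\le q_at\}$ is precisely the set written here as $S^t$ (identifying the coefficient $w_p$ with the demand $d_p$ and keeping the binary restriction on $x$ implicit). The single fact I would extract is that every feasible $(x,y)\in S$ has at most one nonzero coordinate of $y$: if $y=\mathbf{0}$ the capacity inequality forces $x=\mathbf{0}$ since $d_p>0$, and if $y=e_{\hat t}$ the capacity inequality is exactly the membership condition $x\in S^{\hat t}$.

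First I would use this partition to split the maximization in~(\ref{eq:SEPchild}). The objective $\theta^{*\top}x-\alpha^{*\top}y$ equals $0$ at $(\mathbf{0},\mathbf{0})$ and equals $\theta^{*\top}x-\alpha^{*}_{\hat t}$ at any point $(x,e_{\hat t})$ with $x\in S^{\hat t}$; since the maximum over a finite union is the maximum of the maxima over the pieces, this yields
\[
\textup{FEASVAL} \;=\; \max\!\Bigl\{\,0,\ \max_{\hat t\in\mathcal{T}_a}\ \max_{x\in S^{\hat t}}\bigl(\theta^{*\top}x-\alpha^{*}_{\hat t}\bigr)\Bigr\} \;=\; \max\!\Bigl\{0,\ \max_{\hat t\in\mathcal{T}_a}\textup{FEASVAL}^{\hat t}\Bigr\},
\]
which is the first bullet. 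The explicit $0$ term must be kept because an individual $\textup{FEASVAL}^{\hat t}$ can be negative when $\alpha^*_{\hat t}$ is large; in particular $\textup{FEASVAL}\ge 0$, matching the footnoted remark, because $(\mathbf{0},\mathbf{0})\in S$.

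For the second bullet I would just identify the maximizer of the right-hand side above. If the outer maximum is attained by the $0$ term --- which in particular happens whenever $\textup{FEASVAL}=0$ --- then $(\mathbf{0},\mathbf{0})$ is feasible with objective value $\textup{FEASVAL}$ and is therefore optimal for~(\ref{eq:SEPchild}). Otherwise the maximum is attained at some $\hat t\in\textup{argmax}_t\textup{FEASVAL}^t$; taking $x^{\hat t}$ to be an optimal solution of the knapsack $\max\{\theta^{*\top}x : x\in S^{\hat t}\}$ produces a point $(x^{\hat t},e_{\hat t})\in S$ with objective value $\textup{FEASVAL}^{\hat t}=\textup{FEASVAL}$, hence optimal. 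The only mild care needed is to read the shorthand ``$x^{\hat t}\in\textup{argmax}_x S^{\hat t}$'' as maximizing the linear function $\theta^{*\top}x$ over $x\in S^{\hat t}$.

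I do not expect a real obstacle: this is the standard fact that maximizing a linear function over a finite union of $0/1$ sets decomposes over the pieces, and the only points to be vigilant about are not dropping the constant $0$ branch and handling the all-zero point so the optimal-solution claim is literally correct. The reason this reformulation is worthwhile --- and what justifies the ``Decomposition'' label --- is that each subproblem $\max\{\theta^{*\top}x : x\in S^{\hat t}\}$ is a $0/1$ knapsack with integer capacity $q_a\hat t$, so it admits a pseudo-polynomial dynamic program; solving the $|\mathcal{T}_a|$ knapsacks this way was faster in our experiments than solving~(\ref{eq:SEPchild}) as a single mixed-integer program.
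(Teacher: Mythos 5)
Your proof is correct and follows essentially the same reasoning the paper relies on: the observation rests precisely on the disjunctive description $S=(\mathbf{0},\mathbf{0})\cup\bigcup_{t}(S_t,e_t)$ from the proof of Proposition~\ref{prop:Nonneg}, so that maximizing $\theta^{*\top}x-\alpha^{*\top}y$ over $S$ splits into the zero branch and the per-$t$ knapsack maxima. Your handling of the explicit $0$ term and of the shorthand $x^{\hat t}\in\argmax_x S^{\hat t}$ (as maximizing $\theta^{*\top}x$ over $S^{\hat t}$) is exactly the intended reading, so there is nothing to add.
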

\subsection{Improving DP running time.}
Observe now that (\ref{eq:DPbasic}) is a knapsack problem. Moreover, we know that $\|\theta\|_{\infty} \leq B$, as we artificially constrained this in (\ref{eq:SEPmaster}).   We can exploit this feature in the following fashion.

Let us consider a general knapsack of the following form:
\begin{eqnarray}\label{eq:KS}
\begin{array}{rcl}
\textup{OPT}:= &\textup{max}& \sum_{j = 1}^n \theta_j x_j \\
&\textup{s.t.}&\sum_{j = 1}^n w_j x_j \leq q_a, \\
&& x \in \{0, 1\}^n.
\end{array}
\end{eqnarray}

We ``guess'' an optimal value of (\ref{eq:KS}) is $\gamma$. Consider the following min-knapsack:
\begin{eqnarray}\label{eq:minKS}
\begin{array}{rcl}
\textup{VAL}(\gamma):= &\textup{min}& \sum_{j = 1}^n w_j x_j \\
&\textup{s.t.}&\sum_{j = 1}^n \theta_j x_j \geq \gamma, \\
&& x \in \{0, 1\}^n.
\end{array}
\end{eqnarray}

\paragraph{DP running-time} The key fact is that since $\theta$'s are small, any guessed value of $\gamma $ is not too large; in particular $\gamma \leq n\cdot \|\theta\|_{\infty}$. So the DP to solve (\ref{eq:minKS}) takes $\mathcal{O}(n\cdot n\|\theta\|_{\infty}) = \mathcal{O}(n^2B)$ time, which could be significantly smaller time than solving (\ref{eq:KS}) which takes $\mathcal{O}(n\cdot q_a)$ time since $q_a$ can be super large and $B$ is quite small.

To solve (\ref{eq:KS}) using (\ref{eq:minKS}), we begin with an observation.

\begin{observation}\label{obs:rel}
Observe that:
\begin{itemize}
\item If we guessed too low, i.e, $\gamma \leq \textup{OPT}$, then $\textup{VAL}(\gamma) \leq q_a$.
\item If we guessed too high, i.e, $\gamma >  \textup{OPT}$, then $\textup{VAL}(\gamma) > q_a$.
\end{itemize}
\end{observation}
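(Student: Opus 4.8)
The plan is to prove the two bullets by a direct feasibility-transfer argument between the two knapsacks, exploiting that $\theta, w \ge 0$ so that both quantities are well defined; we adopt the convention $\textup{VAL}(\gamma) = +\infty$ when the constraint $\sum_j \theta_j x_j \ge \gamma$ has no $\{0,1\}$ solution (equivalently, when $\gamma > \sum_j \theta_j$). Note also that $\textup{OPT}$ is always attained and nonnegative, since $x = 0$ is feasible for \eqref{eq:KS}.

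For the first bullet, suppose $\gamma \le \textup{OPT}$. Let $x^\star$ be an optimal solution of \eqref{eq:KS}, so that $\sum_j w_j x^\star_j \le q_a$ and $\sum_j \theta_j x^\star_j = \textup{OPT} \ge \gamma$. The second inequality says $x^\star$ is feasible for the minimization in \eqref{eq:minKS}, and therefore $\textup{VAL}(\gamma) \le \sum_j w_j x^\star_j \le q_a$.

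For the second bullet, I would argue by contraposition: assume $\textup{VAL}(\gamma) \le q_a$ and deduce $\gamma \le \textup{OPT}$. Since $\textup{VAL}(\gamma) \le q_a < +\infty$, problem \eqref{eq:minKS} is feasible; let $x'$ be an optimal solution, so $\sum_j \theta_j x'_j \ge \gamma$ and $\sum_j w_j x'_j = \textup{VAL}(\gamma) \le q_a$. The latter shows $x'$ is feasible for \eqref{eq:KS}, hence $\textup{OPT} \ge \sum_j \theta_j x'_j \ge \gamma$, which is exactly $\gamma \le \textup{OPT}$.

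The only point needing a word of care is the infeasible regime of \eqref{eq:minKS}: if $\gamma > \sum_j \theta_j$ then $\textup{OPT} \le \sum_j \theta_j < \gamma$, so we are in the "guessed too high" case, and indeed $\textup{VAL}(\gamma) = +\infty > q_a$ as claimed. Beyond this bookkeeping there is no real obstacle — the statement is just the standard correspondence between a max-knapsack and its companion min-knapsack, and each direction follows by transporting an optimal solution of one problem into the feasible region of the other.
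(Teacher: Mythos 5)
Your proof is correct, and it is essentially the argument the paper has in mind: the paper states Observation~\ref{obs:rel} without an explicit proof, treating the feasibility-transfer between \eqref{eq:KS} and \eqref{eq:minKS} as immediate, which is exactly what you carry out (optimal solution of one problem is feasible for the other, in each direction via contraposition for the second bullet). Your extra bookkeeping for the infeasible regime $\gamma > \sum_j \theta_j$ with the convention $\textup{VAL}(\gamma)=+\infty$ is a sensible clarification consistent with how the bisection search uses the observation.
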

We can use Observation (\ref{obs:rel}) to design a bisection search as follows:
\begin{enumerate}
\item (Pre-process) Let $\textup{LB} = 0$ and $\textup{UB} = \sum_{j = 1}^n \theta_j$. Solve (\ref{eq:KS}) with $\gamma = \textup{UB} $. If $\textup{VAL}(\textup{UB}) \leq q_a$, STOP, as we have solved the problem and the optimal solution of (\ref{eq:minKS}) is an optimal solution of (\ref{eq:KS}). 
% \lmg{Update LB and UB to new method (i.e., ratios) or leave as is?} 
Else:
\item Let $\gamma = \left\lceil\frac{1}{2}\cdot (\textup{LB} + \textup{UB})\right\rceil$. Solve (\ref{eq:KS}).
\item If $\textup{VAL}(\gamma) > q_a$, then set $\textup{UB} \leftarrow \gamma$. If $\textup{VAL}(\gamma) \leq q_a$, then set $\textup{LB} \leftarrow \gamma$. If $\textup{LB} = \textup{UB}$, STOP (and the optimal solution of (\ref{eq:minKS}) is an optimal solution of (\ref{eq:KS})), else GOTO STEP 2.
\end{enumerate}
\paragraph{Overall running time} The number of iterations of bisection search is known to be $\mathcal{O}\left(\textup{log}(\sum_{j = 1}^n \theta_j)\right)$. Combining everything (accounting for the running-time of solving (\ref{eq:minKS})), we get a running-time of $\mathcal{O}(n^2B\cdot\textup{log}(nB))$ which can be much better than $\mathcal{O}(n\cdot q_a)$.

\section{Solver Performances with Helper Metric Inequalities}\label{app:solvers}
To assess whether the benefits of helper metric inequalities are specific to Gurobi, we also utilize CPLEX and SCIP to solve \eqref{rblpB1} (3h-BIN)  and \eqref{rblpB1} plus two rounds of helper metric inequalities added (3h-MI2; as described in Section \ref{sec:helper} and Algorithm \ref{alg:Lagrange}) for Group 3 of the e-commerce fulfillment instances. We provide a runtime of 3 hours and solve the models using default settings for all solvers.
% (including default settings for Gurobi 3h-BIN).
In Table \ref{tab:solverComp}, we report the solvers and versions tested, the resulting average IP gaps calculated using the best found objective, and the average improvement to the IP gap for each solver after adding the helper metric inequalities to BIN \eqref{rblpB1}. Also note that while the runtime for each setup was 3 hours, the time to generate the helper metric inequalities took an average of 1.5 hours, meaning the 3h-MI2 models had an average solve time limit of 1.5 hours.

\begin{table}[htpb]
\footnotesize
\centering
\caption{Comparison of solver performance using Group 3 instances.}
\begin{tabular}{ccccc}
\hline
\multirow{2}{*}{Solver} & \multirow{2}{*}{Version} & \multicolumn{2}{c}{IP Gap} & \multirow{2}{*}{\begin{tabular}[c]{@{}c@{}}IP Gap \\ Impr\end{tabular}} \\ \cline{3-4}
 &  & 3h-BIN & 3h-MI2 &  \\ \hline
Gurobi & 11.0.0 & 8.3\% & 6.0\% & 27.1\% \\
CPLEX & 22.1.1.0 & 9.3\% & 5.9\% & 36.4\% \\
SCIP & 9.0.0 & 8.7\% & 7.6\% & 12.9\% \\ \hline
\end{tabular}
\label{tab:solverComp}
\end{table}

Consistent with the results in Section \ref{subsec:MIPImpr}, the addition of helper metric inequalities results in all three solvers finding stronger lower bounds through improved cutting-plane generation, with CPLEX seemingly benefiting the most.

\section{Arc Flow Conservation Constraints}\label{app:arcBased}
Let $x^o_{ij}\in \{0,1\}$ equal 1 if commodity $o\in\mathcal{O}$ is transported via arc $(i,j)\in\mathcal{A}$, and 0 otherwise. 
The arc flow conservation constraints are formulated as follows:
\begin{align}
    \sum_{(i,j)\in \delta^+(i)}x^o_{ij} - \sum_{(j,i)\in \delta^-(i)}x^o_{ji} &= \begin{cases}
        \psi^o_o, \quad i=o,\\
        0, \quad i \neq o,d,\\
        \psi_d^o, \quad i = d,
    \end{cases}
    & \forall \ i\in \mathcal{N}^o,\ \forall \ o\in \mathcal{O},\label{eq:dual1}
\end{align}
where $\delta^+(i)$ and $\delta^-(i)$ refer to the sets of arcs emanating from and ending at node $i$, respectively, and set $\mathcal{N}^o$ represents the nodes contained in path set $\mathcal{P}_o$ for commodity $o\in\mathcal{O}$.

\section{Arc-Based Formulation When Solving CANAD Instances}\label{app:arcBasedExp}
Similar to the problem definition in Section \ref{sec:form}, let $c_{ij}$ represent the variable cost of transporting one unit of demand on arc $(i,j)\in\mathcal{A}$, $f_{ij}$ represent the fixed cost of activating (i.e., installing one unit of capacity) on arc $(i,j)\in\mathcal{A}$, let $q_{ij}$ represent the capacity of arc $(i,j)\in\mathcal{A}$, and let $d_k$ represent the demand of commodity $k\in\mathcal{K}$ with origin $o(k)$ and destination $d(k)$.
Let $x^k_{ij}\in \{0,1\}$ be a binary variable indicating if commodity $k\in\mathcal{K}$ is transported via arc $(i,j)\in\mathcal{A}$ or not. Let $y_{ij}\in \{0,1\}$ be a binary variable indicating if arc $(i,j)\in\mathcal{A}$ is activated to transport commodity demands or not. The arc-based unsplittable MCND problem is:
{\small \begin{mini!}[2] 
	{\scriptstyle x,y}{\sum_{k\in\mathcal{K}}\sum_{(i,j) \in \mathcal{A}}c_{ij}d_kx^k_{ij} +
	\sum_{(i,j)\in \mathcal{A}}f_{ij} y_{ij}}{\label{A1}}{\label{ObjA1}}
	\addConstraint{\sum_{(i,j)\in \delta^+(i)}x^k_{ij} - \sum_{(j,i)\in \delta^-(i)}x^k_{ji}}{=\begin{cases}
        \phantom{-}1, \quad i=o(k),\\
        \phantom{-}0, \quad i \neq o(k),d(k),\\
        -1, \quad i = d(k),
    \end{cases}}{\  \forall \,  i \in \mathcal{N},\ \forall \, k \in \mathcal{K} \label{1stA1}}
	\addConstraint{\sum_{k\in \mathcal{K}}d_kx^k_{ij}}{\leq q_{ij}y_{ij},\quad}{  \ \forall \,  (i,j)\in \mathcal{A} \label{8thA1}}
	\addConstraint{x^k_{ij}}{\in \{0,1\},}{\ \forall \, (i,j) \in \mathcal{A}}, \  \forall \,  k \in \mathcal{K} \label{xvarA1}
	\addConstraint{y_{ij}}{\in \{0,1\},}{\ \forall \, (i,j) \in \mathcal{A},\label{fvarA1}} 
 \end{mini!} }
where $\delta^+(i)$ and $\delta^-(i)$ refer to the sets of arcs emanating from and ending at node $i\in\mathcal{N}$, respectively. Constraints \eqref{1stA1} ensure flow balance. Constraints \eqref{8thA1} activate arc $(i,j)\in\mathcal{A}$ and ensure its capacity is not exceeded.

When noted, we use the following disaggregated capacity linking constraints strengthen the LP relaxation:
$$x^k_{ij}\leq y_{ij}, \quad \ \forall \, k\in \mathcal{K},\ \forall \, (i,j)\in\mathcal{A}.$$

\section{Detailed CANAD Instance Results}\label{app:arcBasedRes}
In Table \ref{tab:lpArc1}, we provide the individual CANAD instance results that were summarized in Table \ref{tab:lpArcSum}. Results in bold indicate the optimal objective was achieved for that configuration. 
\begin{table}[htbp]
\tiny
\centering
\caption{\rev{Arc-based results for the CANAD instances.} Objectives for (d) and (d),(c) are marked with an asterisk if the LPR objective was greater than the final bound found in \cite{hewitt2013branch}.}
\Rotatebox{90}{%
\begin{tabular}{lrrrlrrlrrlrrlrrlrr}
\hline
\multirow{2}{*}{Instance} & \multicolumn{1}{c}{\multirow{2}{*}{Obj}} & \multicolumn{2}{c}{(a)} & & \multicolumn{2}{c}{(b)} & & \multicolumn{2}{c}{(c)} & & \multicolumn{2}{c}{(d)} & & \multicolumn{2}{c}{(f)} & & \multicolumn{2}{c}{(d),(c)} \\ \cline{3-4} \cline{6-7} \cline{9-10} \cline{12-13} \cline{15-16} \cline{18-19} 
 & \multicolumn{1}{c}{} & \multicolumn{1}{c}{LPR} & \multicolumn{1}{c}{IP Gap} & & \multicolumn{1}{c}{LPR} & \multicolumn{1}{c}{IP Gap} & & \multicolumn{1}{c}{LPR} & \multicolumn{1}{c}{IP Gap} & & \multicolumn{1}{c}{LPR} & \multicolumn{1}{c}{IP Gap} & & \multicolumn{1}{c}{LPR} & \multicolumn{1}{c}{IP Gap} & & \multicolumn{1}{c}{LPR} & \multicolumn{1}{c}{IP Gap} \\ \hline
20-230-40-V-L & 423,933 & 378,623 & 10.69\% &  & 422,853 & 0.25\% &  & \textbf{423,933} & 0.00\% &  & \textbf{423,933}* & 0.00\% &  & \textbf{423,933} & 0.00\% &  & \textbf{423,933}* & 0.00\% \\
20-230-40-V-T & 398,870 & 359,175 & 9.95\% &  & 368,819 & 7.53\% &  & 378,656 & 5.07\% &  & 397,618* & 0.31\% &  & 397,616 & 0.31\% &  & 397,826* & 0.26\% \\
20-230-40-F-T & 668,699 & 584,306 & 12.62\% &  & 633,466 & 5.27\% &  & 642,157 & 3.97\% &  & 667,972* & 0.11\% &  & 667,972 & 0.11\% &  & \textbf{668,699}* & 0.00\% \\
20-230-200-V-L & 94,644 & 68,805 & 27.30\% &  & 91,301 & 3.53\% &  & 91,302 & 3.53\% &  & 90,902\phantom{*} & 3.95\% &  & 90,954 & 3.90\% &  & 91,312\phantom{*} & 3.52\% \\
20-230-200-F-L & 138,084 & 98,343 & 28.78\% &  & 132,036 & 4.38\% &  & 132,036 & 4.38\% &  & 130,842\phantom{*} & 5.24\% &  & 130,741 & 5.32\% &  & 132,048* & 4.37\% \\
20-230-200-V-T & 98,610 & 73,976 & 24.98\% &  & 95,669 & 2.98\% &  & 95,743 & 2.91\% &  & 94,661* & 4.01\% &  & 94,790 & 3.87\% &  & 95,762* & 2.89\% \\
20-230-200-F-T & 137,639 & 100,728 & 26.82\% &  & 131,544 & 4.43\% &  & 131,556 & 4.42\% &  & 130,884\phantom{*} & 4.91\% &  & 130,865 & 4.92\% &  & 131,596* & 4.39\% \\
20-300-40-V-L & 430,253 & 386,462 & 10.18\% &  & 427,947 & 0.54\% &  & \textbf{430,253} & 0.00\% &  & \textbf{430,253}* & 0.00\% &  & \textbf{430,253} & 0.00\% &  & \textbf{430,253}* & 0.00\% \\
20-300-40-F-L & 597,059 & 492,690 & 17.48\% &  & 575,255 & 3.65\% &  & 584,975 & 2.02\% &  & 589,451* & 1.27\% &  & 589,451 & 1.27\% &  & 589,537* & 1.26\% \\
20-300-40-V-T & 501,766 & 448,170 & 10.68\% &  & 460,930 & 8.14\% &  & 479,890 & 4.36\% &  & 500,047* & 0.34\% &  & 500,047 & 0.34\% &  & 500,301* & 0.29\% \\
20-300-40-F-T & 643,395 & 546,666 & 15.03\% &  & 596,839 & 7.24\% &  & 614,690 & 4.46\% &  & 641,608* & 0.28\% &  & 641,608 & 0.28\% &  & 642,043* & 0.21\% \\
20-300-200-V-L & 75,777 & 58,934 & 22.23\% &  & 73,127 & 3.50\% &  & 73,153 & 3.46\% &  & 72,837\phantom{*} & 3.88\% &  & 73,104 & 3.53\% &  & 73,219* & 3.38\% \\
20-300-200-F-L & 117,722 & 87,878 & 25.35\% &  & 110,926 & 5.77\% &  & 110,961 & 5.74\% &  & 109,940\phantom{*} & 6.61\% &  & 109,953 & 6.60\% &  & 110,998* & 5.71\% \\
20-300-200-V-T & 76,281 & 61,482 & 19.40\% &  & 74,003 & 2.99\% &  & 74,097 & 2.86\% &  & 74,013* & 2.97\% &  & 74,242 & 2.67\% &  & 74,143* & 2.80\% \\
20-300-200-F-T & 109,330 & 84,733 & 22.50\% &  & 103,633 & 5.21\% &  & 103,682 & 5.17\% &  & 103,241* & 5.57\% &  & 103,353 & 5.47\% &  & 103,818* & 5.04\% \\
30-520-100-V-L & 54,387 & 44,118 & 18.88\% &  & 53,023 & 2.51\% &  & 53,246 & 2.10\% &  & 53,322* & 1.96\% &  & 53,325 & 1.95\% &  & 53,428* & 1.76\% \\
30-520-100-F-L & 95,877 & 67,560 & 29.53\% &  & 90,174 & 5.95\% &  & 90,542 & 5.56\% &  & 90,477* & 5.63\% &  & 90,651 & 5.45\% &  & 90,839* & 5.25\% \\
30-520-100-V-T & 53,812 & 46,616 & 13.37\% &  & 51,326 & 4.62\% &  & 51,883 & 3.58\% &  & 53,069* & 1.38\% &  & 53,089 & 1.34\% &  & 53,136* & 1.26\% \\
30-520-100-F-T & 99,195 & 76,710 & 22.67\% &  & 94,011 & 5.23\% &  & 94,854 & 4.38\% &  & 95,407* & 3.82\% &  & 95,435 & 3.79\% &  & 95,844* & 3.38\% \\
30-520-400-V-L & 114,317 & 96,691 & 15.42\% &  & 111,763 & 2.23\% &  & 111,813 & 2.19\% &  & 111,540\phantom{*} & 2.43\% &  & 111,493 & 2.47\% &  & 111,835* & 2.17\% \\
30-520-400-F-L & 150,729 & 122,680 & 18.61\% &  & 146,680 & 2.69\% &  & 146,778 & 2.62\% &  & 146,121* & 3.06\% &  & 146,209 & 3.00\% &  & 146,898* & 2.54\% \\
30-520-400-V-T & 116,255 & 100,649 & 13.42\% &  & 114,061 & 1.89\% &  & 114,232 & 1.74\% &  & 113,908\phantom{*} & 2.02\% &  & 114,122 & 1.84\% &  & 114,301\phantom{*} & 1.68\% \\
30-520-400-F-T & 154,842 & 126,357 & 18.40\% &  & 149,751 & 3.29\% &  & 149,939 & 3.17\% &  & 149,506* & 3.45\% &  & 149,713 & 3.31\% &  & 150,012* & 3.12\% \\
30-700-100-V-L & 47,883 & 39,055 & 18.44\% &  & 47,308 & 1.20\% &  & 47,497 & 0.81\% &  & 47,445* & 0.91\% &  & 47,445 & 0.91\% &  & 47,544* & 0.71\% \\
30-700-100-F-L & 60,384 & 45,709 & 24.30\% &  & 58,207 & 3.60\% &  & 58,372 & 3.33\% &  & 58,367\phantom{*} & 3.34\% &  & 58,390 & 3.30\% &  & 58,613* & 2.93\% \\
30-700-100-V-T & 47,670 & 40,992 & 14.01\% &  & 45,078 & 5.44\% &  & 45,822 & 3.88\% &  & 46,645* & 2.15\% &  & 46,719 & 1.99\% &  & 46,899* & 1.62\% \\
30-700-100-F-T & 56,686 & 44,810 & 20.95\% &  & 53,661 & 5.34\% &  & 54,485 & 3.88\% &  & 55,085* & 2.83\% &  & 55,240 & 2.55\% &  & 55,183* & 2.65\% \\
30-700-400-V-L & 98,613 & 79,059 & 19.83\% &  & 96,605 & 2.04\% &  & 96,612 & 2.03\% &  & 96,023\phantom{*} & 2.63\% &  & 96,212 & 2.43\% &  & 96,623* & 2.02\% \\
30-700-400-F-L & 136,549 & 106,564 & 21.96\% &  & 130,724 & 4.27\% &  & 130,758 & 4.24\% &  & 130,143\phantom{*} & 4.69\% &  & 130,050 & 4.76\% &  & 130,822* & 4.19\% \\
30-700-400-V-T & 96,545 & 81,217 & 15.88\% &  & 94,012 & 2.62\% &  & 94,104 & 2.53\% &  & 93,696\phantom{*} & 2.95\% &  & 93,976 & 2.66\% &  & 94,183* & 2.45\% \\
30-700-400-F-T & 131,923 & 109,364 & 17.10\% &  & 127,572 & 3.30\% &  & 127,611 & 3.27\% &  & 127,016* & 3.72\% &  & 127,150 & 3.62\% &  & 127,668* & 3.23\% \\ \hline
Average & 194,443 & 163,198 & 18.9\% &  & 185,881 & 3.9\% &  & 188,246 & 3.3\% &  & 191,160\phantom{*} & 2.8\% &  & 191,229 & 2.7\% &  & 191,591\phantom{*} & 2.4\% \\ \hline
\end{tabular}
}
\label{tab:lpArc1}
\end{table}

\clearpage
 
 \bibliographystyle{apalike}
 \bibliography{References}

% \end{thebibliography}
\end{document}